\g@addto@macro\@verbatim{\microtypesetup{activate=false}}\makeatother
\definecolor{mygreen}{rgb}{0,0.6,0}
\definecolor{mygray}{rgb}{0.5,0.5,0.5}
\definecolor{mymauve}{rgb}{0.58,0,0.82}
\definecolor{backcolour}{rgb}{0.95,0.95,0.92}
\numberwithin{equation}{section}
\theoremstyle{plain}
\newtheorem{thm}{Theorem}[section]
\newtheorem{lm}[thm]{Lemma}
\newtheorem{prop}[thm]{Proposition}
\theoremstyle{definition}
\newtheorem{dfx}[thm]{Definition}
\newtheorem{rem}[thm]{Remark}
\def\={\;=\;}  \def\+{\,+\,}
\def\be{\begin{equation}}   \def\ee{\end{equation}}     \def\bes{\begin{equation*}}    \def\ees{\end{equation*}}
\def\ba{\be\begin{aligned}} \def\ea{\end{aligned}\ee}   \def\bas{\bes\begin{aligned}}  \def\eas{\end{aligned}\ees}
\DeclareMathOperator{\PGL}{PGL}
\DeclareMathOperator{\GL}{GL}
\DeclareMathOperator{\Aut}{Aut}
\newcommand*\oline[1]{%
  \vbox{%
    \hrule height 0.5pt
    \kern0.25ex
    \hbox{%
      \kern-0.15em
      \ifmmode#1\else\ensuremath{#1}\fi
      \kern-0.05em
    }
  }
}
\DeclareFontFamily{U}{BOONDOX-calo}{\skewchar\font=45 }
\DeclareFontShape{U}{BOONDOX-calo}{m}{n}{
  <-> s*[1.05] BOONDOX-r-calo}{}
\DeclareFontShape{U}{BOONDOX-calo}{b}{n}{
  <-> s*[1.05] BOONDOX-b-calo}{}
\DeclareMathAlphabet{\mathcalboondox}{U}{BOONDOX-calo}{m}{n}
\SetMathAlphabet{\mathcalboondox}{bold}{U}{BOONDOX-calo}{b}{n}
\DeclareMathAlphabet{\mathbcalboondox}{U}{BOONDOX-calo}{b}{n}
\def\defbb#1{\expandafter\def\csname b#1\endcsname{\mathbb{#1}}}
\def\defcal#1{\expandafter\def\csname c#1\endcsname{\mathcal{#1}}}
\def\deffrak#1{\expandafter\def\csname frak#1\endcsname{\mathfrak{#1}}}
\def\defop#1{\expandafter\def\csname#1\endcsname{\operatorname{#1}}}
\def\defbf#1{\expandafter\def\csname b#1\endcsname{\mathbf{#1}}}
\def\defcals#1{\@defcals#1\@nil}
\def\@defcals#1{\ifx#1\@nil\else\defcal{#1}\expandafter\@defcals\fi}
\def\deffraks#1{\@deffraks#1\@nil}
\def\@deffraks#1{\ifx#1\@nil\else\deffrak{#1}\expandafter\@deffraks\fi}
\def\defbbs#1{\@defbbs#1\@nil}
\def\@defbbs#1{\ifx#1\@nil\else\defbb{#1}\expandafter\@defbbs\fi}
\def\defbfs#1{\@defbfs#1\@nil}
\def\@defbfs#1{\ifx#1\@nil\else\defbf{#1}\expandafter\@defbfs\fi}
\def\defops#1{\@defops#1,\@nil}
\def\@defops#1,#2\@nil{\if\relax#1\relax\else\defop{#1}\fi\if\relax#2\relax\else\expandafter\@defops#2\@nil\fi}
\title{Rational cohomology of $\mathcal{M}_{4,1}$}
\author{Yiu Man Wong}
\address{Institut f\"ur Mathematik \\ Goethe-Universit\"at Frankfurt \\ 60325 Frankfurt am Main, Germany}
\email{wong@math.uni-frankfurt.de}
\author{Angelina Zheng}
\address{Department of Mathematics and Physics \\ University of Roma Tre \\ 00146 Rome, Italy}
\email{angelina.zheng@uniroma3.it}
\begin{document}

\begin{abstract}
    We compute the rational cohomology of the moduli space $\mathcal{M}_{4,1}$ of non-singular genus $4$ curves with $1$ marked point, using Gorinov-Vassiliev's method. 
\end{abstract}

\maketitle

\section{Introduction}
Let $\mathcal{M}_{g,n}$ be the moduli space of non-singular genus $g$ curves with $n$ marked point. The rational cohomology of $\mathcal{M}_{g,n}$ has been investigated in the following cases:
\begin{itemize}
    \item $\mathcal{M}_{0,n}$: the ring structure of its rational cohomology is given in \cite{Get95}, while multiplicities of irreducible $\mathfrak{S}_n$-representations are determined in \cite{KL};
    \item $\mathcal{M}_{1,n}$ for $n\leq 9$: the $\mathfrak{S}_n$-equivariant Hodge Euler characteristic is computed in \cite{Ber08};
    \item $\mathcal{M}_{2,n}$ for $n\leq 7$: the $\mathfrak{S}_n$-equivariant Hodge Euler characteristic is computed in \cite{Ber09};
    \item $\mathcal{M}_{3,n}$: the rational cohomology ring is known for $n=0,1,2$ by \cite{Loo93}, \cite{BT},\cite{Tom07}, respectively, and for $3\leq n\leq 5$: the $\mathfrak{S}_n$-equivariant Hodge Euler characteristic is computed in \cite{Ber09};
    \item $\mathcal{M}_{4,n}$: the rational cohomology ring is known for $n=0$ by \cite{Tom} and the $\mathfrak{S}_n$-equivariant Hodge Euler characteristic is computed in \cite{BFP} for $n\leq 3$.
\end{itemize}
In this paper, we will compute the rational cohomology of $\mathcal{M}_{4,1}.$ We will follow the strategy in \cite{Tom} and \cite[Section 5]{BT}. The result will be expressed in terms of the \emph{Hodge-Grothendieck polynomial}. Let $K_0(\mathsf{MHS}_{\mathbf{Q}})$ denote the Grothendieck group of mixed Hodge structures and write $\mathbf{L}$ for the class of the Tate-Hodge structure $\mathbf{Q}(-1)$ of weight $2$. 
    
\begin{dfx}
We define the Hodge-Grothendieck polynomials of a graded $\mathbf Q$-vector space $T_\bullet$ which carries a mixed Hodge structure as 
$$P_{T_{\bullet}}(t):=\sum_{i \geq 0}\left[T^i\right]t^i\in K_0(\mathsf{MHS}_{\mathbf{Q}})\left[t\right].$$
\end{dfx}

If $X$ is a quasi-projective variety, then its rational cohomology and its Borel-Moore homology carry a mixed Hodge structure and we will denote by 
    $${P}_X(t):=\sum_{i \geq 0}\left[H^i(X;\mathbf{Q})\right]t^i\in K_0(\mathsf{MHS}_{\mathbf{Q}})\left[t\right],$$
$$\oline{P}_X(t):=\sum_{i \geq 0}\left[\oline{H}_i(X;\mathbf{Q})\right]t^i\in K_0(\mathsf{MHS}_{\mathbf{Q}})\left[t\right],$$ their Hodge-Grothendieck polynomials, respectively.

We will work with mixed Hodge structures which are sums of Tate-Hodge structures. Therefore, the Hodge-Grothendieck polynomials appearing in this paper are equivalent to the cohomology or the Borel-Moore-homology with rational coefficients.

\begin{thm}\label{thm:Main_thm}
    The Hodge-Grothendieck polynomial of $\mathcal{M}_{4,1}$ is
    $$P_{\mathcal{M}_{4,1}}(t)= 1 + 2\mathbf{L}t^2 + 2\mathbf{L}^2t^4 + 2\mathbf{L}^3 t^5 + \mathbf{L}^3 t^6 + \mathbf{L}^4 t^7+ \mathbf{L}^8t^9.
    $$
\end{thm}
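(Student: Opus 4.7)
The plan is to follow the strategy of \cite{Tom} for $\mathcal{M}_4$ and of \cite[Section 5]{BT} for $\mathcal{M}_{3,1}$, splitting $\mathcal{M}_{4,1}$ into the hyperelliptic locus $\mathcal{H}_{4,1}$ and its non-hyperelliptic complement $\mathcal{M}_{4,1}^{\mathrm{nh}} := \mathcal{M}_{4,1}\setminus \mathcal{H}_{4,1}$. The Hodge-Grothendieck polynomial of $\mathcal{H}_{4,1}$ is accessible from the description of hyperelliptic moduli with one marked point (for instance, via configuration spaces on $\mathbb{P}^1$ modulo $\mathrm{PGL}_2$, combined with the extra coordinate of the marked point on the double cover, as done in \cite{BT} for $g=3$). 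The two contributions will be assembled into $P_{\mathcal{M}_{4,1}}(t)$ via the Gysin long exact sequence attached to the closed embedding $\mathcal{H}_{4,1}\hookrightarrow\mathcal{M}_{4,1}$.

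For the non-hyperelliptic part, I would exploit the canonical embedding: a smooth non-hyperelliptic genus $4$ curve $C\subset \mathbb{P}^3$ is the complete intersection of a quadric $Q$ (smooth or a rank-$3$ cone) and a cubic $F$, with $F$ determined modulo $Q$. Adjoining a marked point $p\in C$, the space $\mathcal{M}_{4,1}^{\mathrm{nh}}$ is realised as a $\mathrm{PGL}_4$-quotient of an open subset $X$ in the parameter space of triples $(Q,F,p)$ subject to $p\in Q\cap F$ and $Q\cap F$ smooth. Following Gorinov-Vassiliev, instead of attacking $X$ directly one studies its closed complement $\Sigma$ inside the ambient total space of triples, stratifies $\Sigma$ by the combinatorial type of the failure of smoothness or incidence, and computes $\oline{H}_\bullet(\Sigma;\mathbf{Q})$; Alexander-Poincaré-Lefschetz duality then delivers the cohomology of $X$ and, after dividing out by $\mathrm{PGL}_4$, of $\mathcal{M}_{4,1}^{\mathrm{nh}}$.

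The core technical step is the stratification of $\Sigma$ according to singular configurations of the pair $(C,p)$: strata where $p\notin C$, where $C$ has one or several singular points disjoint from $p$, where $p$ itself is a singular point of $C$, and further refinements according to the local type of the singularity (node, cusp, tacnode, higher) and according to whether $Q$ is smooth or a cone. For each stratum, one builds Gorinov's conical geometric resolution fibred over the appropriate configuration space of singular points and computes its Borel-Moore homology via the Leray spectral sequence; the individual contributions are then assembled into a Vassiliev-type spectral sequence whose $E_1$-page computes $\oline{P}_{\Sigma}(t)$.

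The main obstacle will be the careful enumeration and bookkeeping of strata: compared with the case of $\mathcal{M}_4$, the marked point introduces new strata coming from the interaction between $p$ and the singularities of $C$, and the resulting spectral sequence becomes noticeably larger. Checking that all potential non-trivial differentials on the $E_1$-page vanish for weight reasons (or computing them explicitly when they do not) is the delicate part, together with matching the non-hyperelliptic contribution against the hyperelliptic one so that the Gysin sequence degenerates into short exact sequences compatible with the weight filtration, yielding precisely the polynomial $1 + 2\mathbf{L}t^2 + 2\mathbf{L}^2 t^4 + 2\mathbf{L}^3 t^5 + \mathbf{L}^3 t^6 + \mathbf{L}^4 t^7 + \mathbf{L}^8 t^9$.
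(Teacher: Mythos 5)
Your overall strategy matches the paper's: split off the hyperelliptic locus, realise the non-hyperelliptic part via the canonical model as curves on a quadric (smooth or a cone), run Gorinov--Vassiliev on the resulting discriminant, and reassemble with a Gysin sequence. There are, however, two points where your sketch diverges from what actually makes the computation work, and one of them is a genuine gap. First, the setup: you propose a single $\PGL_4$-quotient of triples $(Q,F,p)$. The paper instead fixes the quadric model once and for all in each stratum ($Z_0=\mathbf{P}^1\times\mathbf{P}^1$, resp.\ $Z_1=\mathbf{P}(1,1,2)\setminus\{[0:0:1]\}$), forms the incidence variety $\mathcal I_i=\{(f,p): f(p)=0\}$, applies Vassiliev's method to the \emph{fibre} $X_i^p$ of $\mathcal I_i\to Z_i$ (non-singular sections through a fixed $p$), recovers $H^\bullet(\mathcal I_i)$ by the Leray spectral sequence of that projection, and only then divides by $G_i=\Aut Z_i$ using the Peters--Steenbrink generalized Leray--Hirsch theorem. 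This last ingredient is not cosmetic: the divisibility of $H^\bullet(\mathcal I_i;\mathbf Q)$ by $H^\bullet(G_i;\mathbf Q)$ is used over and over to determine the ranks of differentials that weight considerations alone do not settle (e.g.\ in the column-$8$ and column-$10$ computations and on the $E_4$-page of the Leray spectral sequence). Your plan to resolve differentials ``for weight reasons'' will not suffice there.

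The concrete gap is at the final assembly. In the Gysin spectral sequence of $C_{2,1}\subset\overline C_{1,1}\subset\overline C_{0,1}=\mathcal M_{4,1}$ there is exactly one potentially non-trivial differential between classes of the \emph{same} weight, supported in cohomological degrees $5$ and $6$ (the class $\mathbf L^3t^5$ from $C_{1,1}$ against the degree-$6$ contribution of $\mathcal H_{4,1}$). No weight or degeneration argument kills it; the paper rules it out by showing directly that $H^6(\mathcal M_{4,1};\mathbf Q)\neq 0$. This uses a specific geometric input: the generator of $\oline H_{14}(\mathcal H_{4,1};\mathbf Q)$ is the class of the locus $A$ of marked Weierstrass points, whose pushforward under the forgetful map is $10[\mathcal H_4]$, and $i_*[\mathcal H_4]\neq 0$ in $\oline H_{14}(\mathcal M_4)$ by Tommasi's computation of $H^\bullet(\mathcal M_4)$; commutativity of pushforwards then forces $i_*[A]\neq 0$ in $\oline H_{14}(\mathcal M_{4,1})\cong H^6(\mathcal M_{4,1};\mathbf Q)$. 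Without this (or an equivalent argument), your proof cannot distinguish the stated answer from the one in which the $\mathbf L^3t^5$ and $\mathbf L^3t^6$ terms cancel, so you would not arrive at the claimed polynomial.
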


This agrees with the polynomial point count of $\mathcal{M}_{4,1}$ in \cite[Theorem 1.5]{BFP}, which is $\#\mathcal{M}_{4,1}(\mathbf F_q) =q^{10}+2q^9+2q^8-q^7-q^6-q^2$. Indeed, from \cite[Theorem 2.1.8]{HRV}, replacing $q$ in $\#\mathcal{M}_{4,1}(\mathbf F_q)$ by $\mathbf L$ gives the (compactly supported) Hodge Euler characteristic. Then by Poincar\'{e} duality, one can compare the results.

We recall that a non-hyperelliptic genus $4$ curve can be canonically embedded in $\mathbf{P}^3$ as the complete intersection of a cubic surface with an irreducible quadric surface. The irreducible quadric surface can either be a non-singular quadric surface or a quadric cone. Thus we can stratifiy the space $\mathcal{M}_{4,1}$ as follows:
\begin{align}\label{strat}
   C_{2,1}\subset \overline{C}_{1,1} \subset \overline{C}_{0,1} = \mathcal{M}_{4,1}, 
\end{align}
where 
\begin{itemize}
    \item $C_{0,1}$ is the locus of non-hyperelliptic genus $4$ curves (with one marked point) that are complete intersections with non-singular quadric surfaces; 
    \item $C_{1,1}$ is the locus of non-hyperelliptic genus $4$ curves (with one marked point) that are complete intersections with quadric cones;
    \item $C_{2,1}$ is the locus of hyperelliptic genus $4$ curves (with one marked point).
\end{itemize}

The rational cohomology of the closed stratum is well known.

\begin{thm}\label{thm:Hyp} For any $g\geq2,$ there is an isomorphism (of mixed Hodge structures) $H^\bullet(\mathcal{H}_{g,1};\mathbf Q)\cong H^\bullet(\mathbf P^1;\mathbf Q),$
where $\mathcal{H}_{g,1}$ is the moduli space of hyperelliptic genus $g$ curves with a marked point.
\end{thm}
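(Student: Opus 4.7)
The plan is to identify $\mathcal{H}_{g,1}$ with a $\mathbf{P}^1$-fibration over $\mathcal{H}_g$ and combine this with the known triviality of $H^\bullet(\mathcal{H}_g;\mathbf{Q})$. The starting observation is that every hyperelliptic curve $C$ of genus $g \geq 2$ carries the canonical hyperelliptic involution $\iota$. Since $\iota\in \Aut(C)$, the pointed curves $(C,p)$ and $(C,\iota(p))$ represent the same class in $\mathcal{H}_{g,1}$; hence the forgetful morphism $\pi\colon \mathcal{H}_{g,1}\to \mathcal{H}_g$ has (generic) fiber $C/\iota\cong\mathbf{P}^1$. Using the presentation $\mathcal{H}_g\cong \Conf_{2g+2}(\mathbf{P}^1)/\PGL_2$ (a hyperelliptic curve is determined by its unordered branch locus), this amounts to the identification
\[\mathcal{H}_{g,1}\;\cong\;\bigl(\Conf_{2g+2}(\mathbf{P}^1)\times \mathbf{P}^1\bigr)/\PGL_2,\]
where the extra $\mathbf{P}^1$ factor records the image of the marked point under the hyperelliptic projection.

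The Leray spectral sequence of $\pi$ then degenerates at $E_2$: because $\Aut(\mathbf{P}^1)=\PGL_2$ is connected, the monodromy acts trivially on $H^\bullet(\mathbf{P}^1;\mathbf{Q})$, so the local systems $R^i\pi_*\mathbf{Q}$ are constant. The spectral sequence is compatible with mixed Hodge structures, and therefore yields
\[H^\bullet(\mathcal{H}_{g,1};\mathbf{Q})\;\cong\;H^\bullet(\mathcal{H}_g;\mathbf{Q})\otimes H^\bullet(\mathbf{P}^1;\mathbf{Q})\]
as an isomorphism of graded mixed Hodge structures. Invoking the classical fact that $H^\bullet(\mathcal{H}_g;\mathbf{Q})\cong\mathbf{Q}$ is concentrated in degree zero (for $g\geq 2$)---which may be deduced from the description $\mathcal{H}_g\cong \mathcal{M}_{0,2g+2}/\mathfrak{S}_{2g+2}$, or by a direct Gorinov--Vassiliev computation on the discriminant in $\Sym^{2g+2}(\mathbf{P}^1)$---then gives the theorem.

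The chief subtlety lies in the first step: the $\PGL_2$-action on $\Conf_{2g+2}(\mathbf{P}^1)$ fails to be free along the locus of hyperelliptic curves with extra automorphisms, so $\pi$ is not literally a $\mathbf{P}^1$-bundle at the scheme-theoretic level. I expect this to be resolved either by restricting to the open stratum where the action is free and observing that the complement is cohomologically negligible (finite stabilizers, rational coefficients), or by working stack-theoretically with the Deligne--Mumford stack $\mathcal{H}_g$, where the forgetful morphism is a genuine $\mathbf{P}^1$-bundle. Either route makes the final identification of mixed Hodge structures a matter of direct inspection.
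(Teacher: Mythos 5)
Your argument is correct and follows essentially the same route as the paper: both proofs run the Leray spectral sequence of the forgetful map $\mathcal{H}_{g,1}\to\mathcal{H}_g$ and use the classical fact that $\mathcal{H}_g$ has the rational cohomology of a point. The only cosmetic difference is that you describe the fiber directly as $C/\iota\cong\mathbf{P}^1$, while the paper phrases the same computation as taking the $\iota$-invariant part of $H^\bullet(C;\mathbf{Q})$; these coincide since $H^\bullet(C/\iota;\mathbf{Q})\cong H^\bullet(C;\mathbf{Q})^{\iota}$.
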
 \begin{proof}
Consider the Leray spectral sequence of $\mathcal H_{g,1}\to \mathcal H_g.$
Since the base space has the rational cohomology of a point, the rational cohomology of $\mathcal H_{g,1}$ is equal to the invariant part, with respect to the hyperelliptic involution, of the cohomology of a hyperelliptic curve.
\end{proof}

The space $C_{0,1}$
parametrizes non-singular $(3,3)$-curves (with one marked point) on $\mathbf{P}^1\times\mathbf{P}^1,$
while the space $C_{1,1}$ 
parametrizes non-singular degree $6$ curves (with one marked point) on the weighted projective plane $\mathbf{P}(1,1,2).$ 

Following \cite[Section 5]{BT}, we consider the incidence correspondence 
$$\mathcal{I}_i:=\{(f,p)\in X_i\times Z_i: f(p)=0\},$$
for $i=0,1.$
Here $Z_0=\mathbf{P}^1\times\mathbf{P}^1$,
$Z_1=\mathbf{P}(1,1,2)\setminus \{[0:0:1]\}$ and $X_i$ is the open space parametrizing non-singular (weighted) polynomials in $V_i$ with $V_0=\Gamma(\mathcal O_{\mathbf{P}^1}(3)\otimes O_{\mathbf{P}^1}(3))$ and $V_1=\Gamma(\mathcal O_{\mathbf{P}(1,1,2)}(6)).$

Then $C_{i,1}$ can be regarded as the quotient $\left[\mathcal I_i /G_i\right],$ with $G_i=\operatorname{Aut}V_i,$ and in particular they have same rational cohomology. 

The space $\mathcal{I}_i$ has a natural forgetful map $\pi_i:\mathcal{I}_i\rightarrow Z_i,$ that is a locally trivial fibration.
We will compute the cohomology of the fibre $X_{i}^{p}$ using the Gorinov-Vassiliev's method, \cite {Vas}, \cite{Gor}, \cite{Tom}, and then deduce the cohomology of $\mathcal{I}_i$ via the Leray spectral sequence associated to $\pi_i.$

Finally, the rational cohomology of the quotients can be obtained via the generalized Leray-Hirsch theorem from \cite[Theorem 3]{PS}. 

Gorinov-Vassiliev's method and the generalized version of Leray-Hirsch theorem will be briefly recalled in Section 2, together with some homological lemmas which will be used throughout the work. 
The cohomology of $C_{0,1}, C_{1,1}$ will be computed in Sections 3 and 4, respectively.

\begin{thm}\label{thm_C_0}
    The Hodge-Grothendieck polynomial of $C_{0,1}$ is
    $$P_{C_{0,1}}(t)= 1 + \mathbf{L}t^2 + \mathbf{L}^3 t^5  + \mathbf{L}^4 t^7+ \mathbf{L}^8t^9.
    $$
\end{thm}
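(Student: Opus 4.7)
My plan is to follow the three-step strategy outlined in the introduction for the case $i=0$. Since $C_{0,1}\cong[\mathcal{I}_0/G_0]$, I need to compute the $G_0$-equivariant rational cohomology of $\mathcal{I}_0$ and then repackage it as $P_{C_{0,1}}(t)$. The key intermediate object is the fiber $X_0^p$ of $\pi_0\colon\mathcal{I}_0\to Z_0=\mathbf{P}^1\times\mathbf{P}^1$ over a chosen base point $p$; here $X_0^p$ is the complement of the discriminant inside the $15$-dimensional affine space $V_0^p\subset V_0$ of bi-degree $(3,3)$ polynomials vanishing at $p$.

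Step one is to compute $P_{X_0^p}(t)$ by Gorinov-Vassiliev. The input is a stratification of the discriminant $\Sigma^p\subset V_0^p$ by the configuration of singular points of the bi-cubic together with the local analytic type at each singularity, terminating with a deepest stratum of everywhere-singular (reducible or non-reduced) curves. For each stratum I would build the standard simplicial resolution, whose fibers are configuration spaces of points on $\mathbf{P}^1\times\mathbf{P}^1$ together with the affine subspace of $V_0^p$ cutting out polynomials with the prescribed singular behaviour; the Borel-Moore homology of each stratum then splits as a tensor product of a configuration-space part and a pure Tate part coming from the linear system. Running the Vassiliev spectral sequence and applying Alexander duality inside $V_0^p$ yields $P_{X_0^p}(t)$, with careful bookkeeping of Tate twists.

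Step two is to compute $P_{\mathcal{I}_0}(t)$ via the Leray spectral sequence of $\pi_0$. Since $G_0$ acts transitively on $Z_0$, the local system $R^\bullet\pi_{0\,*}\mathbf{Q}$ is constant with stalk computed in step one; the $E_2$-page then equals $P_{X_0^p}(t)\cdot P_{Z_0}(t)$ where $P_{Z_0}(t)=1+2\mathbf{L}t^2+\mathbf{L}^2 t^4$, and degeneration at $E_2$ should be forced by weight considerations, since the Tate weights appearing in the fiber and in the base live in different bidegrees along each diagonal. Step three uses the generalized Leray-Hirsch theorem \cite[Theorem 3]{PS} to pass from $\mathcal{I}_0$ to $[\mathcal{I}_0/G_0]$. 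The group $G_0$ is an extension of $\Aut(\mathbf{P}^1\times\mathbf{P}^1)=(\PGL_2\times\PGL_2)\rtimes\mathbf{Z}/2$ by the scalar $\mathbf{G}_m$, so $H^\bullet(BG_0;\mathbf{Q})$ is a polynomial ring in pure Tate classes whose Hodge-Grothendieck polynomial is known; after checking the lifting hypothesis on tautological classes, dividing $P_{\mathcal{I}_0}(t)$ by $P_{BG_0}(t)$ in $K_0(\mathsf{MHS}_\mathbf{Q})[t]$ produces the claimed formula.

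The main obstacle will be step one: producing a complete and correct Gorinov-Vassiliev stratification of the discriminant of $(3,3)$-curves on $\mathbf{P}^1\times\mathbf{P}^1$ passing through the fixed point $p$, and correctly bookkeeping the configuration-space cohomology and Hodge-Tate twists on every singular stratum. Particular care is needed for the deep strata of reducible or non-reduced curves — for instance those containing a component of bi-degree $(1,0)$, $(0,1)$, $(1,1)$ or $(2,2)$ — whose top contribution is what produces the class $\mathbf{L}^8 t^9$ in $P_{C_{0,1}}(t)$. Once step one is in hand, steps two and three reduce to essentially formal manipulations in $K_0(\mathsf{MHS}_\mathbf{Q})[t]$.
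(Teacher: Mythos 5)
Your overall architecture (Gorinov--Vassiliev on the fiber $X_0^p$, Leray spectral sequence of $\pi_0$, then the generalized Leray--Hirsch theorem to descend to the quotient) is the same as the paper's, but two of your three steps contain claims that would derail the computation. The most serious is your assertion in step two that the Leray spectral sequence of $\pi_0$ degenerates at $E_2$ by weight considerations. It does not: the fiber and base classes do combine to produce entries of equal weight on the same total-degree diagonal (for example $E_4^{0,3}=\mathbf{Q}(-2)^3$ and $E_4^{4,0}=\mathbf{Q}(-2)$, both of weight $4$), and there are genuinely non-trivial $d_4$ differentials. Determining their ranks is the hardest part of the argument: the paper needs both the divisibility of $H^\bullet(\mathcal{I}_0;\mathbf{Q})$ by $H^\bullet(G_0';\mathbf{Q})$ coming from Theorem \ref{thm:gen_leray_hirsch}, and a separate intersection-theoretic lemma (computing $i_*[W]=[M_1]\smile[M_2]=10\,j^*([X]\smile[Y])$ inside $V\times\mathbf{P}^1\times\mathbf{P}^1$) to pin down the rank of $d^4_{0,3}$, after which the remaining ranks in \eqref{diff} follow. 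Without this, the answer you would read off the $E_2$ page is wrong.

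Step three also has two problems. First, the generalized Leray--Hirsch theorem gives $H^\bullet(\mathcal{I}_0;\mathbf{Q})\cong H^\bullet(\mathcal{I}_0/G_0';\mathbf{Q})\otimes H^\bullet(G_0';\mathbf{Q})$, so one divides by the Hodge--Grothendieck polynomial of the \emph{group} $G_0'$ (an exterior algebra on classes of degrees $1,3,3$), not by that of $BG_0$ (a polynomial ring); as written your division is not the right operation. Second, $G_0$ is only an extension of $G_0'$ by the ruling-swapping involution $\nu$, and rationally one must take $\nu$-invariants of $H^\bullet(\mathcal{I}_0/G_0';\mathbf{Q})$ rather than divide by anything. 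This step is not cosmetic: in degree $9$ the quotient $\mathcal{I}_0/G_0'$ carries $\mathbf{Q}(-8)^2+\mathbf{Q}(-9)^4$, and only a single $\nu$-invariant class $\mathbf{Q}(-8)$ survives to $C_{0,1}$, which is exactly the $\mathbf{L}^8t^9$ term in the statement. Tracking the $\nu$-action therefore requires going back through every stratum of the discriminant and recording which top-degree classes are invariant or anti-invariant, which your plan does not provide for.
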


\begin{thm}\label{thm_C_1}
     The Hodge-Grothendieck polynomial of $C_{1,1}$ is
    $$P_{C_{1,1}}(t)= 1 + \mathbf{L}t^2 + \mathbf{L}^2 t^3.
    $$
\end{thm}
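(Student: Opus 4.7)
The plan is to follow the strategy sketched in Section 1: we realise $C_{1,1}$ as the stack quotient $[\mathcal{I}_1/G_1]$, compute the rational cohomology of the total space $\mathcal{I}_1$ via the forgetful fibration $\pi_1 \colon \mathcal{I}_1 \to Z_1$, and then descend to the quotient by means of the generalised Leray--Hirsch theorem of \cite{PS}.

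First I would compute the rational cohomology of the fibre $X_1^p = \{f \in V_1 : f(p)=0,\ \{f=0\}\text{ non-singular}\}$ by Gorinov--Vassiliev's method. Fixing a base point $p \in Z_1$, the affine hyperplane $H_p = \{f \in V_1 : f(p)=0\}$ contains the discriminant $\Sigma_p$ consisting of polynomials defining singular or degenerate weighted sextic curves. I would enumerate the geometric stratification of $\Sigma_p$ by singularity type (nodes, cusps, tacnodes, higher-order singularities, non-reduced curves, curves with a distinguished component, etc.), build the associated conical simplicial resolution whose strata are fibred in affine subspaces of $V_1$ over configuration spaces of singular points on $\mathbf{P}(1,1,2)$, and read off $\overline{P}_{\Sigma_p}(t)$ from the resulting spectral sequence. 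Alexander duality inside $H_p$ then yields $P_{X_1^p}(t)$.

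Next, I would feed $P_{X_1^p}(t)$ into the Leray spectral sequence of $\pi_1$. The base $Z_1$ is an open subset of $\mathbf{P}(1,1,2)$ whose rational cohomology is easy to write down, and the monodromy on $H^\bullet(X_1^p; \mathbf{Q})$ is trivial because a connected subgroup of $G_1$ acts transitively on $Z_1$ and all classes involved are of Tate type. Hence the $E_2$ page is the tensor product $H^\bullet(Z_1; \mathbf{Q}) \otimes H^\bullet(X_1^p; \mathbf{Q})$, and any differentials are controlled by the incidence geometry; this produces $P_{\mathcal{I}_1}(t)$. Finally, applying \cite[Theorem 3]{PS} to the action of $G_1 = \operatorname{Aut} V_1$ on $\mathcal{I}_1$ gives $P_{C_{1,1}}(t)$ by dividing out by $P_{BG_1}(t)$; the substantial cancellations will leave the compact answer $1 + \mathbf{L}t^2 + \mathbf{L}^2 t^3$.

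The main obstacle I expect is the Gorinov--Vassiliev computation on the weighted projective plane. Unlike a smooth projective plane, one must track both the point constraint $f(p)=0$ and the weighted grading of $V_1 = \Gamma(\mathcal{O}_{\mathbf{P}(1,1,2)}(6))$, which affects the codimensions of the affine fibres over each configuration stratum; the list of singularity strata that genuinely contribute is delicate and the $E_1$ differentials of the auxiliary spectral sequence have to be computed by hand, as in \cite{Tom}. A secondary, less serious, difficulty is checking the hypotheses of \cite[Theorem 3]{PS} for the $G_1$-action, in particular that $H^\bullet(\mathcal{I}_1; \mathbf{Q})$ is a free module over $H^\bullet(BG_1; \mathbf{Q})$, so that the quotient formula applies cleanly.
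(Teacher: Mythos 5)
Your outline coincides with the paper's own strategy (indeed, with the strategy the paper already announces in its introduction): stratify the discriminant in $V_1$, run Gorinov--Vassiliev to get $\oline{H}_\bullet(\Sigma_1;\mathbf Q)$ and hence the cohomology of the fibre by Alexander duality, feed this into the Leray spectral sequence of $\pi_1\colon\mathcal I_1\to Z_1$, and divide out the group via the generalized Leray--Hirsch theorem. The problem is that, as written, the proposal contains none of the content that actually constitutes the proof. The theorem \emph{is} a computation, and everything that makes it come out to $1+\mathbf L t^2+\mathbf L^2 t^3$ is deferred: the explicit list of configuration strata on $\mathbf P(1,1,2)$, which must keep track of \emph{two} distinguished points --- the marked point $p$ and the cone vertex $t=[0:0:1]$ --- and their relative position, and is therefore genuinely different from the list in \cite[Section 4]{Tom}; the verification that the large-configuration strata (the analogues of columns 10--12) have vanishing twisted Borel--Moore homology, which is precisely what keeps the answer so small; and the determination of the ranks of the differentials in the Leray spectral sequence, which the paper pins down using the divisibility constraint \eqref{div} rather than ``incidence geometry''. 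Without these steps the argument is a plan, not a proof, and the expected value of the answer cannot be extracted from it.

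There is also one concrete error in the final step. You propose to obtain $P_{C_{1,1}}$ by dividing $P_{\mathcal I_1}$ by $P_{BG_1}(t)$, and you state the hypothesis of \cite[Theorem 3]{PS} as freeness of $H^\bullet(\mathcal I_1;\mathbf Q)$ over $H^\bullet(BG_1;\mathbf Q)$. The theorem as used here asserts $H^\bullet(\mathcal I_1;\mathbf Q)\cong H^\bullet(\mathcal I_1/G_1;\mathbf Q)\otimes H^\bullet(G_1;\mathbf Q)$ provided the pullback along the orbit map $H^\bullet(\mathcal I_1;\mathbf Q)\to H^\bullet(G_1;\mathbf Q)$ is surjective (which is supplied by \cite[Theorem 10]{BT}); one therefore divides by the cohomology of the group $G_1\simeq\GL(2)\times\mathbf C^*$ itself, a finite-dimensional exterior algebra, not by the polynomial ring $H^\bullet(BG_1;\mathbf Q)$. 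Taken literally, dividing by $P_{BG_1}$ would not even produce a polynomial.
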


Finally, the Gysin spectral sequence associated to the stratification \ref{strat}, whose columns are determined by Theorems \ref{thm:Hyp}, \ref{thm_C_0}, \ref{thm_C_1}, yields the main result in Theorem \ref{thm:Main_thm}.

\begin{proof}[Proof of Theorem \ref{thm:Main_thm}]

The only classes in the spectral sequence from \ref{strat}, having same weight for which a non-trivial differential might exist are those supported in degrees $5,6$. Such a differential is trivial because of the non-vanishing of $H^6(\mathcal M_{4,1};\mathbf Q).$ In fact, we have the following commutative diagram.
 \begin{center}
\begin{tikzcd}
\oline{H}_{14}(\mathcal{H}_{4,1}) \arrow[r,"i_*"] \arrow[d,"\pi_*"] &\oline{H}_{14}(\mathcal{M}_{4,1})\arrow[d,"\pi_*"]\\
\oline{H}_{14}(\mathcal{H}_{4})\arrow[r,"i_*"] &\oline{H}_{14}(\mathcal{M}_{4})
\end{tikzcd}
\end{center}
Here $i$ denotes the two natural inclusions and $\phi$ is the forgetful map. 

The generator of $\oline{H}_{14}(\mathcal{H}_{4,1};\mathbf Q)$ is the class of the locus $A$ where the marked point is a Weierstrass point, i.e. $\pi_*[A] = 10[\mathcal{H}_4] \in \oline{H}_{14}(\mathcal{H}_{4})\cong H^0(\mathcal{H}_{4};\mathbf Q)$. Since $i_*[\mathcal{H}_4]\neq 0\in \oline{H}_{14}(\mathcal{M}_{4})$ (c.f. \cite{Tom}), the commutativity of the diagram yields that $i_*[A]\neq 0 \in \oline{H}_{14}(\mathcal{M}_{4,1};\mathbf Q)\cong{H}^6(\mathcal{M}_{4,1};\mathbf Q).$
\end{proof}

\begin{rem}\label{rm:SC}
From \cite{L95}, \cite{BSZ}, we already knew that $R^{3}(\mathcal{M}_{4,1})\neq 0,$ where $R^\bullet (\mathcal{M}_{4,1})$ denotes the the tautological (Chow) ring, which is equal to the rational Chow ring $A^\bullet(M_{4,1})$, \cite{CL}.

Furthermore, the cycle class map $\mu:A^\bullet(\mathcal{M}_{4,1})\to H^{2\bullet}(\mathcal{M}_{4,1};\mathbf Q)$ is an isomorphism. 
The injectivity be proved by using the \emph{CKgP property}, \cite[Definition 2.5]{BS}, which holds for $\mathcal M_{4,1}$, $\overline{\mathcal M}_{4,1}$ and any component of the boundary $\partial \mathcal M_{4,1}=\overline{\mathcal M}_{4,1}\setminus \mathcal M_{4,1}$, \cite[Section 4]{CL}. Let $\widetilde{\partial \mathcal M}_{4,1}$ denote the normalization of the boundary, we have a commutative diagram between two right exact sequences,
\begin{center}\begin{equation*}\label{exc}
    \begin{tikzcd}
        A^{i}(\widetilde{\partial \mathcal M}_{4,1})\arrow[r]\arrow[d] &A^{i}(\overline{ \mathcal M}_{4,1})\arrow[r,]\arrow[d,]&A^i(\mathcal M_{4,1})\arrow[r]\arrow[d] &0\\
        H^{2i}(\widetilde{\partial \mathcal M}_{4,1};
        \mathbf{Q})\arrow[r] & H^{2i}(\overline{ \mathcal M}_{4,1};\mathbf{Q})\arrow[r]&W_{2i}H^{2i}(\mathcal M_{4,1};\mathbf{Q})\arrow[r] &0\\
    \end{tikzcd}\end{equation*}
\end{center}
where the vertical maps are the cycle class maps. By \cite[Lemma 3.11]{CL} and \cite[Lemma 4.3]{CLP} the first two are isomorphisms and the third one is surjective, and therefore also an isomorphism. 
The surjectivity of the cycle class map then follows from Theorem \ref{thm:Main_thm} since all even degree cohomology is pure.

This can also be verified, using the \texttt{Sage} package \texttt{admcycles}. The Pixton's relations span the complete set of relations in the tautological rings $R^\bullet(\mathcal{M}_{4,1})\subset A^\bullet(\mathcal{M}_{4,1})$ and $RH^{2\bullet}(\mathcal{M}_{4,1};\mathbf Q)\subset H^{2\bullet}(\mathcal{M}_{4,1};\mathbf{Q})$,  and one can compute the ring structures.
This gives $R^\bullet(\mathcal{M}_{4,1})=RH^{2\bullet}(\mathcal{M}_{4,1};\mathbf Q)$ and comparing the ring with our result, this proves that $A^{\bullet}(\mathcal{M}_{4,1})\cong H^{2\bullet}(\mathcal{M}_{4,1};\mathbf Q)$. 
\end{rem}

\begin{rem}\label{rm:OT}
    From Theorem \ref{thm:Hyp} and \cite{BT} $H^\bullet(\mathcal{M}_{g,1};\mathbf Q)=H^\bullet(\mathcal{M}_g;\mathbf Q)\otimes H^{\bullet}(\mathbf {P}^1;\mathbf Q),$ for $g=2,3.$ Our results thus provides the first case ($g=4$) in which the rational cohomology of $\mathcal{M}_{g}$ with respect to the local system $R^1f_*\mathbf Q$ is non-trivial.

The Leray spectral sequence of the forgetful map $f:\mathcal M_{4,1}\to \mathcal M_4,$ is 
    $$E_2^{p,q}=H^p(\mathcal M_{4};R^qf_*\mathbf Q)\Rightarrow 
 H^{p+q}(\mathcal M_{4,1};\mathbf Q).$$
The rational cohomology of $\mathcal M_{4,1}$ is then determined by the cohomology of $\mathcal M_4$ with constant coefficients, and with coefficients in $R^1f_*\mathbf Q.$
The result in Theorem \ref{thm:Main_thm} together with \cite{Tom} therefore prove that the Hodge-Grothendieck polynomial of $H^\bullet(\mathcal M_4;R^1f_*\mathbf Q)$ is either $\mathbf L^3t^4+\mathbf L^8t^8$ or $\mathbf L^3t^4+\mathbf L^4t^6+\mathbf L^4t^7+\mathbf L^8t^8$.
\end{rem}

\section*{Acknowledgements}
This project started at the AGNES summer school on Intersection Theory on Moduli spaces at Brown University in 2023, we thank all the organizers and participants for the constructive environment and stimulating interactions.
We would also like to thank Samir Canning for his help with Remark \ref{rm:SC} and Orsola Tommasi for Remark \ref{rm:OT}.

The first author is supported
by the DFG-project MO 1884/2-1 and by the Collaborative Research Centre TRR 326 ``Geometry and Arithmetic of Uniformized Structures". The second author is a member of the INDAM group GNSAGA.

\section{Preliminaries}
In this section, we will briefly introduce the techniques and prove some results which we will use frequently in the following sections. 

\subsection{Gorinov-Vassiliev's method}

We briefly introduce Gorinov-Vassiliev's method, for the sake of completeness and to set the notation. For the details of the method, we refer the readers to \cite[Section 2.1]{Tom}. 

Let $Z$ be a projective variety and $V$ a vector space of global sections of some vector bundle on $Z$.
The cohomology of the open space $X\subset V$ of non-singular sections is equivalent to the Borel-Moore homology of the \emph{discriminant} $\Sigma=V\setminus X$ via Alexander duality:
\begin{equation*}
\widetilde{H}^j(X;\mathbf Q)\cong \oline{H}_{2\operatorname{dim}V-j-1}(\Sigma;\mathbf Q)\otimes \mathbf Q(-\operatorname{dim}V).
\end{equation*}
Here $\widetilde{H}^\bullet$ refers to the reduced cohomology and $\oline{H}_\bullet$ denote Borel-Moore homology, \cite[Chapter 19]{F}.

Gorinov-Vassiliev's method allows us to compute the Borel-Moore homology of a discriminant, by  constructing a simplicial
resolution for $\Sigma$, starting from a collection $X_1,\dots,X_N$ of families of configurations in $Z$, satisfying the axioms in \cite[List 2.3]{Tom}.
This collection defines a space $|\mathcal{X}|,$ together with a \emph{geometric realization} $|\mathcal{X}|\rightarrow \Sigma$ that is a homotopy equivalence and induces an isomorphism on Borel-Moore homology, which respects mixed Hodge structures.
The method provides a stratification by locally closed spaces $\{F_j\}_{j=1,\dots,n}$ of $|\mathcal{X}|.$
This gives a spectral sequence converging to $\oline{H}_\bullet(\Sigma;\mathbf Q)$ whose columns are such that
$$E^1_{p,q}=\oline{H}_{p+q}(F_p;\mathbf Q),$$
and can be computed using the following description of the strata, this spectral sequence will be called \emph{Vassiliev's spectral sequence}.
\begin{prop}[\cite{Gor}]\label{prop:Gor}
\begin{enumerate}
\item For any $i=1,\dots,N$ the stratum $F_i$ is a complex
vector bundle of rank $d_i$ over $\Phi_i$, which is a locally trivial fibration over $X_i$.
\item If $X_i$ consists of configurations of $m$ points, the fiber of $\Phi_i$ over any $K \in X_i$ is an
$(m-1)$-dimensional open simplex, which changes orientation under the homotopy
class of a loop in $X_i$ interchanging a pair of points in $K$.
\item If $X_N = \{Z\}$, $F_N$ is the open cone with vertex a point (corresponding to the
configuration $Z$), over $\cup_{j=1}^{N-1}\Phi_j.$
\end{enumerate}
\end{prop}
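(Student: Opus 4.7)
The plan is to unpack the construction of the simplicial resolution $|\mathcal{X}| \to \Sigma$ produced by the Gorinov-Vassiliev machinery and identify each stratum $F_i$ explicitly, so that the three claims follow from the construction itself. I would recall that the resolution assigns, to each configuration $K \in X_i$ (a set of $m$ points of $Z$), the affine simplex $\Delta_K$ spanned by the vertices of $K$ placed in general position in some auxiliary infinite-dimensional affine space, together with the linear subspace $L(K) := \{f \in V : f|_K = 0\}$ of sections vanishing on $K$. The piece of $|\mathcal{X}|$ contributed by $X_i$ is then the total space of pairs $((K,\sigma),f)$ with $\sigma$ in the relative interior of $\Delta_K$ and $f \in L(K)$, and this piece is exactly the stratum $F_i$.

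For part (1), the axioms in \cite[List 2.3]{Tom} guarantee that, as $K$ varies over $X_i$, the codimension of $L(K)$ in $V$ is locally constant, so $L_i := \{(K,f) : K \in X_i,\ f \in L(K)\}$ is a complex vector bundle over $X_i$ of some constant rank $d_i$. On the other hand, $\Phi_i \to X_i$ is the tautological open-simplex bundle, which is a locally trivial fibration because the combinatorial type of $K$ is constant on $X_i$. Pulling $L_i$ back along $\Phi_i \to X_i$ yields the claimed rank-$d_i$ vector bundle structure on $F_i \to \Phi_i$.

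For part (2), the fiber of $\Phi_i$ over $K$ is by construction the set of strictly positive barycentric combinations of the $m$ labeled points of $K$, so it is an open $(m-1)$-simplex. A loop in $X_i$ whose monodromy permutes two of the $m$ points acts on the vertex set of $\Delta_K$ by the corresponding transposition; since this is an odd permutation, it reverses the orientation of the simplex, yielding the sign action of $\mathfrak{S}_m$ on the top form of $\Delta_K$ and hence on $F_i$. For part (3), the special role of $X_N = \{Z\}$ is to record the ``everything singular'' limit that closes up the resolution: the corresponding piece is obtained by coning off the already assembled $\bigcup_{j=1}^{N-1}\Phi_j$ from a single apex labeled by the configuration $Z$, yielding the open cone described in the statement.

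I expect the main obstacle to be not the individual stratum descriptions, which are largely formal once the resolution is set up, but rather the bookkeeping needed to glue the pieces over different $X_i$ coherently inside $|\mathcal{X}|$ and to verify that the orientation sign in (2) transfers correctly to the monodromy of the auxiliary vector bundle $F_i$. This requires choosing the general-position embeddings of the $\Delta_K$ consistently across $X_i$ and, for (3), matching the cone structure with the compactification already built in the preceding strata; both steps are where the axioms of \cite[List 2.3]{Tom} do the real work.
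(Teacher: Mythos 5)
The paper offers no proof of this proposition: it is quoted from \cite{Gor}, with the construction of the simplicial resolution deferred to \cite{Tom}, so there is no in-paper argument to compare yours against. Your sketch is the standard one — identify $F_i$ as the pullback to the open-simplex bundle $\Phi_i\to X_i$ of the tautological bundle $K\mapsto L(K)$, read off the simplex fiber and the sign of a transposition for (2), and use the coning-off of $\bigcup_{j<N}\Phi_j$ at the apex labelled by $Z$ for (3) — and the structure of the argument is right, including your closing remark that the axioms of the list in \cite{Tom} are what make the local constancy of $\dim L(K)$, the disjointness of the strata, and the consistent embedding of the simplices work.

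There is, however, one substantive slip: in this setting $\Sigma$ is the locus of \emph{singular} sections, so $L(K)$ must be the space of sections whose singular locus contains $K$ (vanishing together with all first derivatives at each point of $K$), not the space of sections merely vanishing on $K$. With your definition the ranks $d_i$ come out wrong; for instance the first configuration type for $C_{0,1}$ is the single point $p$ inside $V_0\cong\mathbf{C}^{15}$ and carries $d_1=13$, which is the codimension of the two extra derivative conditions beyond $f(p)=0$, not the codimension of the vanishing condition alone. Since the whole spectral sequence is built from these ranks, you should state $L(K)$ correctly; the rest of your outline then goes through as written.
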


\subsection{Generalized Leray-Hirsch Theorem}

\begin{thm}[\cite{PS}]\label{thm:gen_leray_hirsch}

Let $X$ be an algebraic variety and $G$ be a reductive group acting on $X$ with finite stabilizers. 
Consider the orbit map 
\begin{align*}
\rho: G&\rightarrow X\\
g&\mapsto g\cdot x_0,
\end{align*}
with $x_0\in X$ fixed.

If the map induced by the orbit map 
\begin{equation*}
    \rho^*:H^\bullet(X;\mathbf Q)\rightarrow H^\bullet(G;\mathbf Q)
\end{equation*}
is surjective, then there is an isomorphism of graded linear spaces
 
\begin{align*}
	H^\bullet(X/G;\mathbf Q)\otimes H^\bullet(G;\mathbf Q)\cong H^\bullet(X;\mathbf Q).
	\end{align*}
\end{thm}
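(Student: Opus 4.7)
The plan is to reduce Theorem \ref{thm:gen_leray_hirsch} to the classical multiplicative Leray--Hirsch argument applied to the quotient map $\pi : X \to X/G$. First I would regard $X/G$ as the coarse moduli space of the quotient stack $[X/G]$; since the stabilizers are finite, rational cohomology of the stack agrees with rational cohomology of $X/G$, and $\pi$ becomes a fibration (in the sense of rational homotopy) whose fiber over $[x_0]$ is the orbit $G\cdot x_0 \cong G/\mathrm{Stab}(x_0)$. Because $\mathrm{Stab}(x_0)$ is finite, its rational cohomology is trivial and it acts trivially on $H^\bullet(G;\mathbf Q)$, so the fiber has the rational cohomology of $G$.

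Next I would set up the Leray spectral sequence
$$E_2^{p,q} \;=\; H^p\bigl(X/G;\, R^q\pi_\ast \mathbf Q\bigr) \;\Longrightarrow\; H^{p+q}(X;\mathbf Q).$$
The key preliminary step is to check that $R^q\pi_\ast\mathbf Q$ is the constant local system with stalk $H^q(G;\mathbf Q)$. Monodromy around a loop in $X/G$ corresponds, up to homotopy, to translation by an element of $G$ on the fiber, which acts trivially on $H^\bullet(G;\mathbf Q)$ because $G$ is connected (and in the general case one reduces to $G^0$ and handles $\pi_0(G)$ separately). This identifies $E_2^{p,q} \cong H^p(X/G;\mathbf Q)\otimes H^q(G;\mathbf Q)$.

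Now I would invoke the hypothesis. Surjectivity of $\rho^\ast$ means that every class in $H^q(G;\mathbf Q)\cong E_2^{0,q}$ lifts to a global class in $H^q(X;\mathbf Q)$, i.e.\ is a permanent cycle. By the multiplicativity of the Leray spectral sequence, products $\pi^\ast\alpha \smile \tilde\beta$ of pulled-back base classes with these lifts are also permanent cycles, and such products span $E_2^{\bullet,\bullet}$. Hence the spectral sequence degenerates at $E_2$, and choosing lifts $\tilde\beta_j$ of a basis $\{\beta_j\}$ of $H^\bullet(G;\mathbf Q)$ defines a morphism
$$\Phi : H^\bullet(X/G;\mathbf Q)\otimes H^\bullet(G;\mathbf Q)\longrightarrow H^\bullet(X;\mathbf Q),\qquad \alpha\otimes\beta_j\longmapsto \pi^\ast\alpha\smile\tilde\beta_j,$$
which induces the identity on the associated graded of the Leray filtration and is therefore an isomorphism of graded vector spaces.

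The main obstacle is not the spectral-sequence manipulation, which is classical once set up, but the foundational step of making $\pi : X \to X/G$ behave like a genuine fibration despite the finite stabilizers, and verifying triviality of the local system $R^q\pi_\ast\mathbf Q$. These are handled by passing to the quotient stack $[X/G]$ (using that its rational cohomology equals that of the coarse space) and by the translation-homotopy argument for connected $G$; after that, the classical multiplicative Leray--Hirsch argument drives the proof to completion.
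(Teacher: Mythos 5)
The paper does not actually prove this statement: it is imported verbatim from \cite{PS} and used as a black box, so there is no internal proof to compare against. Measured against the argument in \cite{PS}, your proposal reconstructs essentially the right proof: restriction to an orbit surjective $\Rightarrow$ all of $E_2^{0,\bullet}$ consists of permanent cycles $\Rightarrow$ multiplicativity forces degeneration $\Rightarrow$ the Leray--Hirsch map $\alpha\otimes\beta_j\mapsto\pi^\ast\alpha\smile\tilde\beta_j$ is filtered-graded bijective. The one place where your write-up is a gesture rather than an argument is exactly the step you flag: for the honest topological quotient $\pi\colon X\to X/G$ the sheaves $R^q\pi_\ast\mathbf Q$ are \emph{not} automatically local systems with stalk $H^q(\text{fiber})$, since $\pi$ is neither proper nor locally trivial when stabilizers jump; justifying this directly requires a slice theorem. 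The clean repair, which is what your appeal to $[X/G]$ amounts to, is to replace $\pi$ by the Borel construction: $X\simeq EG\times X\to EG\times_G X$ is a genuine principal $G$-bundle, its fiber is $G$ itself (a torsor, not the orbit $G/\mathrm{Stab}(x_0)$ --- though for connected $G$ and finite $\mathrm{Stab}(x_0)$ these have the same rational cohomology), the restriction-to-fiber map is identified with $\rho^\ast$, and classical multiplicative Leray--Hirsch applies verbatim; one then concludes with $H^\bullet(EG\times_G X;\mathbf Q)\cong H^\bullet(X/G;\mathbf Q)$, which is where the finite-stabilizer hypothesis is used. Two small points to make explicit if you write this up: connectedness of $G$ is genuinely needed (both for triviality of the monodromy and for $H^\bullet(G/\Gamma;\mathbf Q)\cong H^\bullet(G;\mathbf Q)$), and it holds for the groups $G_0'$, $G_1'$ to which the paper applies the theorem; and the "products span $E_2$" step is the Zeeman-comparison/Leray--Hirsch bookkeeping, which is standard but should be cited rather than asserted.
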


The hypotheses of Theorem \ref{thm:gen_leray_hirsch} are known to hold for moduli spaces of non-singular $(3,3)$-curves on $\mathbf P^1\times \mathbf P^1$, \cite[3.1]{Tom}, and for moduli of non-singular curves on a quadric cone, \cite[4.1]{Tom}. Moreover, by \cite[Theorem 10]{BT} this also extends to $\mathcal I_i,$ $i=0,1.$ 

By considering the action of $G_i=\Aut Z_i$ on $\mathcal{I}_i$, we then expect that \begin{equation}
\label{div}H^\bullet(\mathcal{I}_i;\mathbf Q)\cong H^\bullet(\mathcal{I}_i/G_i;\mathbf Q)\otimes H^\bullet(G_i;\mathbf{Q}). \end{equation}
 
We will make use of the divisibility of the cohomology of $\mathcal{I}_i$ to determine the ranks of the differential maps in the spectral sequence. 
\subsection{Homological lemmas}

Here we describe some spaces that will be used in further computations, together with their affine stratifications which allows us to easily compute their Borel-Moore homology, using known results.

\begin{lm}\label{lm:char_con} We have the following characterizations of spaces parametrizing non-singular conics on a non-singular quadric surface $Q$ in $\mathbf{P}^3$ (i.e. $(1,1)$-curves):

\begin{itemize}
    \item The space parametrizing non-singular conics on $Q$ is $\check{\mathbf{P}^3}\setminus \check{Q}$ is quasi-isomorphic to $\mathbf{C}^3\setminus\mathbf{C}^1.$ 
    \item The space parametrizing non-singular conics through a fixed point $p$ on $Q$ is $\check{p}\setminus (\check{\ell_1}\cup\check{\ell_2})$ for $\ell_1,\ell_2$ a line of the first and of the second ruling through $p$. The space is quasi-isomorphic to $\mathbf{C}^2\setminus\mathbf{C}^1$.
    \item The space parametrizing non-singular conics {\em not} through a fixed point $p$ on $Q$ is $\check{\mathbf{P}}^3\setminus (\check{Q}\cup \check{p})$. The space is quasi-isomorphic to $\mathbf{C}^3\setminus\mathbf{C}^2$.
\end{itemize}
\end{lm}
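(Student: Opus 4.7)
The plan is to compute the Hodge--Grothendieck polynomial of each of the three spaces via Gysin long exact sequences and compare with the polynomial of $\mathbf{C}^a\setminus\mathbf{C}^b$, which is $1+\mathbf{L}^{a-b}t^{2(a-b)-1}$ (by deformation-retract to $S^{2(a-b)-1}$ when $a-b\ge 2$, or to $S^1$ when $a-b=1$). The key geometric input is: planes in $\mathbf{P}^3$ tangent to the smooth quadric $Q$ form the dual quadric $\check{Q}\subset\check{\mathbf{P}}^3$, which is again smooth and isomorphic to $\mathbf{P}^1\times\mathbf{P}^1$; and the pencil $\check{\ell}_i$ of planes containing a line $\ell_i$ of the $i$-th ruling through $p$ is a line in $\check{Q}$, with $\check{\ell}_1$ and $\check{\ell}_2$ meeting exactly at the tangent plane $T_pQ$. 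In particular $\check{Q}\cap\check{p}=\check{\ell}_1\cup\check{\ell}_2$.

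For the first claim, I would apply the Gysin exact sequence to the smooth hypersurface $\check{Q}\hookrightarrow\check{\mathbf{P}}^3$. Using the cohomology rings of $\check{\mathbf{P}}^3$ and $\check{Q}\cong\mathbf{P}^1\times\mathbf{P}^1$, the Gysin pushforward $H^{k-2}(\check{Q})(-1)\to H^k(\check{\mathbf{P}}^3)$ is: multiplication by $2$ in degree $2$ (since $[\check{Q}]=2h$), an isomorphism in degree $6$ (fundamental classes), and the map $(a,b)\mapsto a+b$ in degree $4$ (because each ruling of $\check{Q}$ is a projective line in $\check{\mathbf{P}}^3$, hence pushes to $h^2$). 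The only nontrivial kernel, a $\mathbf{Q}(-2)$ in degree $4$, lifts to $H^3(\check{\mathbf{P}}^3\setminus\check{Q})$, yielding the polynomial $1+\mathbf{L}^2t^3$ and matching $\mathbf{C}^3\setminus\mathbf{C}^1$.

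For the second claim, $\check{p}\cong\mathbf{P}^2$ and $\check{\ell}_1,\check{\ell}_2$ are two distinct lines meeting at $T_pQ$; removing $\check{\ell}_1$ yields $\mathbf{A}^2$, and further removing the trace of $\check{\ell}_2$ (an affine line) leaves $\mathbf{C}\times\mathbf{C}^\times\cong\mathbf{C}^2\setminus\mathbf{C}^1$ literally as an algebraic variety. For the third claim, the same peeling inside $\check{Q}$ shows $\check{Q}\setminus(\check{\ell}_1\cup\check{\ell}_2)\cong\mathbf{A}^2$, and this sits as a smooth closed codimension-one subvariety of $\check{\mathbf{P}}^3\setminus\check{p}\cong\mathbf{A}^3$ (the intersection with $\check{p}$ has already been removed). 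A final Gysin sequence for this pair collapses, by acyclicity of $\mathbf{A}^3$, to $H^k(U)\cong H^{k-1}(\mathbf{A}^2)(-1)$ for $k\ge 1$, so $H^0(U)=\mathbf{Q}$, $H^1(U)=\mathbf{Q}(-1)$, and zero in higher degrees, i.e.\ the polynomial $1+\mathbf{L}t$ of $\mathbf{C}^3\setminus\mathbf{C}^2$.

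The only genuinely geometric step is the degree-$4$ Gysin computation in claim~1 — verifying that both rulings of $\check{Q}$ push forward to $h^2$ and therefore produce a one-dimensional kernel — together with the identification $\check{Q}\cap\check{p}=\check{\ell}_1\cup\check{\ell}_2$ used in claim~3. The remainder is routine Tate-twist bookkeeping inside the Gysin long exact sequences.
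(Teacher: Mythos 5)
Your argument is correct, and all the geometric inputs you use (self-duality of $Q$, so that $\check{Q}\cong\mathbf{P}^1\times\mathbf{P}^1$; the pencils $\check{\ell}_i$ as the two rulings of $\check{Q}$ through the point $T_pQ$; and the identification $\check{Q}\cap\check{p}=\check{\ell}_1\cup\check{\ell}_2$) are exactly the ones the paper relies on. The difference is purely in how the "quasi-isomorphism" is then extracted. The paper observes that each of $\check{\mathbf{P}}^3$, $\check{Q}$, $\check{p}$, $\check{\ell}_i$ and their pairwise intersections admits compatible decompositions into affine cells, so the complement is obtained by simply subtracting cells: e.g. $[\mathbf{P}^3]-[\check{Q}]=(\mathbf{C}^3+\mathbf{C}^2+\mathbf{C}^1+\mathbf{C}^0)-(\mathbf{C}^2+2\mathbf{C}^1+\mathbf{C}^0)=\mathbf{C}^3-\mathbf{C}^1$, and similarly (with inclusion--exclusion through $\check{Q}\cap\check{p}$) for the third item. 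That two-line bookkeeping immediately gives the Borel--Moore homology with its Hodge structure, which is all the lemma is used for downstream. Your route instead runs the Gysin long exact sequence for each smooth pair and verifies the ranks of the pushforwards explicitly; the only place where this costs real work is the degree-$4$ map $H^2(\check{Q})(-1)\to H^4(\check{\mathbf{P}}^3)$, where you correctly identify both ruling classes as pushing to $h^2$, producing the one-dimensional kernel that becomes $H^3$ of the complement. What your approach buys is the actual cohomology groups (and, in the second item, a literal variety isomorphism $\check{p}\setminus(\check{\ell}_1\cup\check{\ell}_2)\cong\mathbf{C}\times\mathbf{C}^{\times}$, which is stronger than the stated quasi-isomorphism); what it loses is brevity, since the cell-subtraction argument needs no analysis of any maps. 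Both are complete proofs of the statement.
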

\begin{proof}
First, notice that a non-singular conics on $Q$ is just the intersection of a hyperplane in $\mathbf{P}^3$ not tangent to $Q$. In addition, a non-singular quadric surface in $\mathbf{P}^3$ is self-dual. Hence, the affine stratifications of the spaces involved in the lemma is given by isomorphisms:
$\check{Q}\cong \mathbf{P}^1\times\mathbf{P}^1$, $\check{\mathbf{P}}^3\cong\mathbf{P}^3$, $\check{p}\cong \mathbf{P}^2$ and $\check{\ell_1},\check{\ell_2}\cong\mathbf{P}^1.$

Since the intersections $\check{\ell_1}\cap\check{\ell_2}$ resp. $\check{Q}\cap\check{p}$ are just unions of affine strata which are $\mathbf{C}^0$ and $\mathbf{C}^1\sqcup \mathbf{C}^1\sqcup \mathbf{C}^0$ respectively, the lemma follows by removing the affine strata accordingly.
\end{proof}

We denote by $F(Z,k)$ the space of \emph{ordered} configurations of $k$ points in $Z,$ and by $B(Z,k)$ the space of \emph{unordered} ones. The local system $\pm\mathbf Q$ is locally isomorphic to $\mathbf{Q},$ and its monodromy representation equals the sign representation when interchanging a pair of points in $F(Z,k)$. 

\begin{lm}{\cite[Lemma 2]{Vas}}\label{lm_vas}
The Borel-Moore homology $\oline{H}_\bullet(B(\mathbf C^N, k);\pm \mathbf Q)$ is trivial for any $k \geq 2$.
\end{lm}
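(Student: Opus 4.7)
The plan is to reduce the statement about unordered configurations to one about ordered configurations, and then exploit a large free action of a connected group commuting with the symmetric group.

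First, I would apply the transfer to the finite étale Galois cover $\pi : F(\mathbf{C}^N, k) \to B(\mathbf{C}^N, k)$ with Galois group $\mathfrak{S}_k$. Since $\pi^{*}(\pm\mathbf{Q})$ is canonically the constant sheaf $\mathbf{Q}$ on which $\mathfrak{S}_k$ acts through the sign character, the transfer yields
\[
\oline{H}_\bullet(B(\mathbf{C}^N, k); \pm\mathbf{Q}) \;\cong\; \oline{H}_\bullet(F(\mathbf{C}^N, k); \mathbf{Q})^{\mathrm{sgn}},
\]
the sign-isotypic summand of the permutation action of $\mathfrak{S}_k$ on the Borel--Moore homology of the ordered configuration space. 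The lemma thus reduces to showing that this summand vanishes for every $k \ge 2$.

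Next, I would use the free action on $F(\mathbf{C}^N, k)$ of the affine-dilation group $G := \mathbf{C}^N\rtimes\mathbf{C}^{*}$ defined by $(v,\lambda)\cdot(z_i) = (\lambda z_i + v)$. This action commutes with $\mathfrak{S}_k$, and it is free precisely because $k \ge 2$: the equation $\lambda z_i + v = z_i$ for all $i$ forces $(\lambda-1)z_i = -v$ to be independent of $i$, so either $\lambda = 1$ (and hence $v = 0$), or all the $z_i$ coincide, contradicting distinctness. Thus $F(\mathbf{C}^N, k) \to Y := F(\mathbf{C}^N, k)/G$ is a principal $G$-bundle, and since $G$ deformation retracts onto its maximal compact subgroup $S^1 \subset \mathbf{C}^{*}$, the Leray/Gysin spectral sequence relates the sign-isotypic of $H^{\bullet}(F; \mathbf{Q})$ to $H^{\bullet}(Y; L)$ for the descended sign local system $L$ on $Y$. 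The final vanishing can then be obtained either by iterating the reduction or by invoking the explicit $\mathfrak{S}_k$-character of $H^{\bullet}(F(\mathbf{C}^N, k); \mathbf{Q})$: the Orlik--Solomon algebra of the type $A_{k-1}$ braid arrangement, with $\mathfrak{S}_k$-character computed by Lehrer--Solomon, for $N = 1$, and F.~Cohen's computation for general $N$, both of which give multiplicity zero for the sign representation whenever $k \ge 2$.

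The step I expect to be the main obstacle is this last vanishing. The free-action reductions are clean and cheap, but on their own they do not force the sign-isotypic piece to disappear; one ultimately needs genuine combinatorial or representation-theoretic input about the braid arrangement to conclude. For the small values of $k$ actually used in the body of the paper, direct inspection of the $\mathfrak{S}_k$-character of $H^{\bullet}(F(\mathbf{C}^N, k); \mathbf{Q})$ already suffices.
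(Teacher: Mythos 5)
Note first that the paper does not prove this lemma at all: it is imported verbatim as \cite[Lemma 2]{Vas}, so there is no internal argument to compare against. Your proposal is a legitimate alternative route, and its two essential steps are sound. The transfer identification $\oline{H}_\bullet(B(\mathbf{C}^N,k);\pm\mathbf{Q})\cong\oline{H}_\bullet(F(\mathbf{C}^N,k);\mathbf{Q})^{\mathrm{sgn}}$ is correct for a finite quotient with rational coefficients, and since the $\mathfrak{S}_k$-action preserves the complex orientation, Poincar\'e duality lets you pass to the sign multiplicity in ordinary cohomology. The concluding fact --- that $\mathrm{sgn}$ has multiplicity zero in $H^\bullet(F(\mathbf{C}^N,k);\mathbf{Q})$ for $k\geq 2$ --- is a genuine theorem (for $N=1$ it is extractable from the Lehrer--Solomon character, i.e.\ the same source \cite{LS86} this paper uses for Lemma \ref{lm:FC_34}, where one can indeed check that $\mathbf{S}_{1^3}$ and $\mathbf{S}_{1^4}$ never occur; for general $N$ it reduces to $N=1$ because the real dimension $2N$ is even, so by Cohen's description the $\mathfrak{S}_k$-representation on $H^{(2N-1)p}$ agrees with that on $H^p$ for $N=1$ with no extra sign twist). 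Two caveats. First, your suggestion that the vanishing could instead be obtained ``by iterating the reduction'' by the free $\mathbf{C}^N\rtimes\mathbf{C}^*$-action is not a proof: that reduction is clean for $k=2$ (where $(ij)$ acts by $v\mapsto -v$ on $\mathbf{C}^N\setminus 0$, hence trivially on cohomology), but it does not visibly terminate for larger $k$, and you rightly flag that the representation-theoretic input is unavoidable. Second, your argument is therefore a proof by citation of a nontrivial character computation, whereas Vassiliev's own proof is a more elementary induction; since the paper itself only cites Vassiliev, your sketch is at a comparable level of rigor, but if you wanted a self-contained proof you would need to actually extract the sign-multiplicity statement from \cite{LS86} rather than assert it.
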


\begin{lm}[\cite{LS86}]\label{lm:FC_34}
The Borel-Moore homology of $F(\mathbf C,k),$ as $\mathfrak S_k$-representations are 
    \begin{equation*}
        \oline{H}_i(F(\mathbf C,3);\mathbf Q)=
        \begin{cases}
            \mathbf Q(3)\otimes \mathbf{S}_{3},& i=6;\\
            \mathbf Q(2)\otimes (\mathbf{S}_{3}\oplus\mathbf{S}_{2,1}),& i=5;\\
            \mathbf Q(1)\otimes \mathbf{S}_{2,1},& i=4;\\
            0&\text{otherwise,}
        \end{cases}
    \end{equation*}
    for $k=3$ and
    \begin{equation*}
        \oline{H}_i(F(\mathbf C,4);\mathbf Q)=
        \begin{cases}
            \mathbf Q(4)\otimes \mathbf{S}_{4},& i=8;\\
            \mathbf Q(3)\otimes (\mathbf{S}_{4}\oplus\mathbf{S}_{3,1}\oplus\mathbf{S}_{2,2}),& i=7;\\
            \mathbf Q(2)\otimes ({2}\mathbf{S}_{3,1}\oplus \mathbf{S}_{2,1^2}\oplus \mathbf{S}_{2,2}),& i=6;\\
            \mathbf Q(1)\otimes (\mathbf{S}_{3,1}\oplus \mathbf{S}_{2,1^2}),& i=5;\\
            0&\text{otherwise,}  
        \end{cases}
    \end{equation*}
    for $k=4.$
\end{lm}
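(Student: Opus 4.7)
The plan is to invoke Poincaré duality and then compute the symmetric-group equivariant cohomology of $F(\mathbf C, k)$ for small $k$ via the Orlik--Solomon algebra of the braid arrangement; the result then follows by a direct combinatorial identification of the $\mathfrak S_k$-representations, as established by Lehrer--Solomon.

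First, since $F(\mathbf C, k)$ is a smooth complex variety of (complex) dimension $k$, Poincaré duality supplies an $\mathfrak S_k$-equivariant isomorphism of mixed Hodge structures
\begin{equation*}
\oline{H}_i(F(\mathbf C, k);\mathbf Q) \;\cong\; H^{2k-i}(F(\mathbf C, k);\mathbf Q)(k).
\end{equation*}
Under this identification, each Tate twist $\mathbf Q(j)$ appearing in the target Borel--Moore group corresponds to the pure weight $2(k-j)$ piece of cohomology. Since the braid arrangement is a complement of hyperplanes in $\mathbf C^k$, its cohomology is a direct sum of Tate structures, and the weight $2m$ part lives in cohomological degree $m$. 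This matches the Tate shifts in the claimed formulas and reduces the lemma to computing the graded $\mathfrak S_k$-representation $H^\bullet(F(\mathbf C, k);\mathbf Q)$ for $k=3,4$.

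Next, by Arnold's theorem the ring $H^\bullet(F(\mathbf C, k);\mathbf Q)$ is isomorphic to the Orlik--Solomon algebra of the type $A_{k-1}$ braid arrangement, generated by classes $\omega_{ij} = \frac{1}{2\pi i} d\log(z_i - z_j)$ modulo the triangle relation $\omega_{ij}\omega_{jk} + \omega_{jk}\omega_{ki} + \omega_{ki}\omega_{ij} = 0$, with Poincaré polynomial $\prod_{i=1}^{k-1}(1+it)$. This gives the correct total ranks ($1,3,2$ for $k=3$ and $1,6,11,6$ for $k=4$), matching dimensions of the claimed representations.

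Finally, one identifies the $\mathfrak S_k$-module structure in each degree. The cleanest route is to apply Lehrer--Solomon's formula expressing $H^i(F(\mathbf C,k);\mathbf Q)$ as a direct sum of induced sign representations from centralizers of elements of $\mathfrak S_k$ with exactly $k-i$ cycles; for $k=3,4$ the induced characters are small enough to decompose into irreducibles $\mathbf S_\lambda$ by direct character arithmetic, producing exactly the decompositions asserted in the lemma. Alternatively, one can compute using the Fadell--Neuwirth fibration $F(\mathbf C,k) \to F(\mathbf C,k-1)$, whose Serre spectral sequence degenerates, and then track the induced $\mathfrak S_k$-action inductively. The main obstacle is simply the bookkeeping needed to pin down the $\mathfrak S_4$-types at $k=4$, particularly the appearance of $\mathbf S_{3,1}$ with multiplicity two in degree $2$; this is where appealing directly to \cite{LS86} is most convenient.
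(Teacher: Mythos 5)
Your proposal is correct and matches the paper, which gives no independent argument for this lemma but simply cites \cite{LS86}; the Poincar\'e duality and purity bookkeeping you add (valid here since $\mathfrak S_k$ acts holomorphically, hence orientation-preservingly, on the smooth $k$-dimensional variety $F(\mathbf C,k)$, and the braid-arrangement complement has $H^m$ pure of type $(m,m)$) is the standard translation from the cohomological statement of Lehrer--Solomon to the Borel--Moore form used in the paper. The only slight imprecision is calling the Lehrer--Solomon summands ``induced sign representations'': the characters $\xi_c$ of the centralizers are specific linear characters built from roots of unity on the cyclic factors, not just signs, but this does not affect the decompositions you obtain for $k=3,4$.
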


\section{Curves on a non-singular quadric}

A non-singular quadric surface $Q$ in $\mathbf{P}^3$ is isomorphic to $Z_0=\mathbf{P}^1\times\mathbf{P}^1$ via the Segre embedding. 
Let $[x_0:x_1]$ and $[y_0:y_1]$ denote the coordinates on $\mathbf{P}^1\times\mathbf{P}^1$.
A curve on $\mathbf{P}^1\times\mathbf{P}^1$ is defined by a bihomogeneous polynomial on the variables $x_0,x_1,y_0,y_1$. 
We call the curves defined by a polynomial of bidegree $(m,n)$ an \emph{$(m,n)$-curve}. 
The lines on $\mathbf{P}^1\times\mathbf{P}^1$ are the $(0,1)$-curves, respectively $(1,0)$-curves, which we will call lines of the first, respectively second ruling.

The curves which are intersections of a fixed non-singular quadric surface with a cubic surface in $\mathbf{P}^3$ are exactly the $(3,3)$-curves on $\mathbf{P}^1\times\mathbf{P}^1$. The space of $(3,3)$-curves is a complex vector space $V=\mathbf{C}[x_0,x_1;y_0,y_1]_{(3,3)}\cong \mathbf{C}^{16}$ while the space of $(3,3)$-curves through a fixed point $p$ is a vector subspace $V_0\subset V$ of dimension $d=15$.

As in \cite[Section 3.1]{Tom}, the automorphism group $G_0=Aut(V)$ is a semi-direct product $\mathfrak{S}_2\ltimes G_0'$, where
$$G_0'=\GL(2)\times\GL(2)/\{\lambda I,\lambda^{-1}I\}_{\lambda\in\mathbf{C}^*}. $$
The action of $\mathfrak{S}_2$ is generated by the involution $\nu$ interchanging the rulings on $\mathbf{P}^1\times\mathbf{P}^1$ while the action $G_0'$ is induced by $\GL(2)$ on $\mathbf{P}^1$. 
Moreover, the cohomology of $G_0'$ is given by
\begin{align*}    H^\bullet(G_0';\mathbf{Q})\cong H^\bullet(\mathbf{C}^*;\mathbf{Q})\otimes H^\bullet(\PGL(2);\mathbf{Q})^{\otimes 2}.
\end{align*}
The geometric quotient $[\mathcal{I}_0/G_0']$ is a double cover of $C_{0,1}=[\mathcal{I}_0/G_0]$ and $H^\bullet(C_{0,1};\mathbf{Q})$ is just the invariant part of $H^\bullet(\mathcal{I}_0/G_0';\mathbf{Q})$ under the involution $\nu$.

The whole list of families of singular configurations of $(3,3)$-curves in $Z_0$ can be found in \cite[Table 2]{Tom}. We can modify that list to obtain a list of families of singular configurations of $(3,3)$-curves through $p$. Here is the list of families of singular configurations that have non-trivial contribution to $\oline{H}_\bullet(\Sigma_0;\mathbf Q)$.

\begin{table}[h!]\caption{List of singular configurations}
\begin{tabular}{c|l|c}
Type&Description&Dimension\\
\hline
1&The point $p. $&$\left[13\right]$\\
2&A point different from $p$. &$\left[12\right]$\\
3&Two points: $p$ and any other point.& $\left[10\right]$\\
4&Two points both different from $p$ and not collinear with $p$. &$\left[9\right]$\\
5&Three points: $p$ and any two other points such that they are not collinear.& $\left[7\right]$\\
6&Three points: any three points different from $p$ such that no three of them  &\\
&plus $p$ are collinear.&$\left[6\right]$\\
7&Four points: $p$ and three points such that no three of them are collinear.&$\left[4\right]$\\
8&Six points: intersection of $Q$ with three concurrent lines in $\mathbf P^3$,&\\
& which are intersections of components of three conics, containing $p.$ &$\left[1\right]$\\
9&Seven points: intersections of components of two lines of  &\\
&different rulings and two conics, containing $p$.&$\left[1\right]$\\
10&Eight points: intersection of components of the union of two pairs&\\
&of lines of different rulings and a conic, with $p$ any point on the curve.&$\left[1\right]$\\
11&The whole $\mathbf{P}^1\times \mathbf{P}^1$.&$\left[0\right]$\\
\end{tabular}
\end{table}
 \begin{table}[ht!]\caption{Spectral sequence converging to $\oline{H}_\bullet(\Sigma_0;\mathbf Q).$}\centering
\resizebox{\textwidth}{!}{
\begin{tabular}{c|ccccccccccc}
$26$&&$\mathbf{Q}(14)$&&&&&&&&&\\
$25$&$\mathbf{Q}(13)$&&&&&&&&&&\\
$24$&&$\mathbf{Q}(13)^2$&&&&&&&&&\\
$23$&&&&&&&&&&&\\
$22$&&&$\mathbf{Q}(12)$&&&&&&&&\\
$21$&&&&$\mathbf{Q}(12)^2$&&&&&&&\\
$20$&&&$\mathbf{Q}(11)^2$&&&&&&&&\\
$19$&&&&$\mathbf{Q}(11)$&&&&&&&\\
$18$&&&&&&&&&&&\\
$17$&&&&&$\mathbf{Q}(10)^2$&&&&&&\\
$16$&&&&&&$\mathbf{Q}(10)$&&&&&\\
$15$&&&&&$\mathbf{Q}(9)$&&&&&&\\
$14$&&&&&&&&&&&\\
$13$&&&&&&&&&&&\\
$12$&&&&&&&$\mathbf{Q}(8)$&$\mathbf{Q}(7)$&&&\\
$11$&&&&&&&&$\mathbf{Q}(6)^2$&$\mathbf{Q}(7)$&&\\
$10$&&&&&&&&$\mathbf{Q}(5)$&$\mathbf{Q}(6)^3$&$\mathbf{Q}(6)^4$&\\
$9$&&&&&&&&&$\mathbf{Q}(5)^3$&$\mathbf{Q}(5)^8$&\\
$8$&&&&&&&&&$\mathbf{Q}(4)$&$\mathbf{Q}(4)^4$&$\mathbf Q(6)^2+\mathbf Q(5)^4$\\
$7$&&&&&&&&&&&$\mathbf Q(5)^5+\mathbf Q(4)^8$\\
$6$&&&&&&&&&&&$\mathbf Q(4)^4+\mathbf Q(3)^4$\\
$5$&&&&&&&&&&&$\mathbf Q(3)$\\

\hline
&$(1)$&$(2)$&$(3)$&$(4)$&$(5)$&$(6)$&$(7)$&$(8)$&$(9)$&$(10)$&(11)
\end{tabular}
}
\label{Mainss_notdiv}
\end{table}

\subsection{Columns (1)--(7)}
From Proposition \ref{prop:Gor} we can immediately compute the first seven columns of the spectral sequence in Table \ref{Mainss_notdiv}.

The space $F_1$ is a $\mathbf C^{13}$-bundle over $\{p\}\cong \mathbf C^0.$

The space $F_2$ is a $\mathbf C^{14}$-bundle over $\mathbf{P}^1\times \mathbf P^1\setminus\{p\}\cong \mathbf C^2\sqcup \mathbf C^1\sqcup \mathbf C^1.$

The space $F_3$ is a $\mathbf C^{12}\times \mathring{\Delta}_1$-bundle over $\mathbf{P}^1\times \mathbf P^1\setminus\{p\}\cong \mathbf C^2\sqcup \mathbf C^1\sqcup \mathbf C^1.$

The space $F_4$ is a $\mathbf C^{10}\times \mathring{\Delta}_1$-bundle over $B(\mathbf{P}^1\times \mathbf P^1\setminus\{p\},2),$ which is quasi-isomorphic to $(\mathbf C^2 \times \mathbf C^1)^2\sqcup (\mathbf C^1\times \mathbf C^1).$

The space $F_5$ is a $\mathbf C^{7}\times \mathring{\Delta}_2$-bundle over $B(\mathbf{P}^1\times \mathbf P^1\setminus\{p\},2).$ 

The space $F_6$ is a $\mathbf C^{6}\times \mathring{\Delta}_2$-bundle over $B(\mathbf{P}^1\times \mathbf P^1\setminus\{p\},3),$ which is quasi-isomorphic to $\mathbf C^2 \times \mathbf C^1\times \mathbf C^1.$

The space $F_7$ is a $\mathbf C^{4}\times \mathring{\Delta}_3$-bundle over $B(\mathbf{P}^1\times \mathbf P^1\setminus\{p\},3).$

\subsection{Column (8)}
The configuration space $X_8$ consists of configurations of six points on the quadric surface $Q$, which are the points of intersection between three concurrent lines in $\mathbf{P}^3$ and the quadric surface $Q$. Moreover, the six points can be regarded as the intersections of three $(1,1)$-curves (conics) on $Q$ such that the fixed point $p$ lies on the union of the conics.  We have the following possibilities.
\begin{itemize}
    \item[(8a)] The fixed point $p$ is in the configuration.
    \item[(8b)] The configuration does not contain the fixed point $p$ and the conic on which $p$ lies is reducible
    \item[(8c)] The configuration does not contain the fixed point $p$ and the conic on which $p$ lies is irreducible.
\end{itemize}

\subsubsection{Configuration of type 8a}

Following \cite[3.4]{Tom}, the space $X_{8a}$ can be considered as a fiber bundle over the space $\mathbf{P}^3\setminus(Q\cup T_pQ)$,  where $q\in \mathbf{P}^3\setminus(Q\cup T_pQ)$ is the common point of the three concurrent lines and the fiber over $q$ is the configuration space parametrizing the lines.
The lines are non-tangent to $Q$ and the line $pq$ is included. 
Notice that $q$ cannot be on the tangent plane $T_pQ$ otherwise $pq$ will be tangent to $Q$. 
Any line through $q$ is a point in $\mathbf{P}(T_q\mathbf{P}^3)\cong
\mathbf{P}^2$, and the space of lines tangent to $Q$ will be a non-singular conic $C$ on $\mathbf{P}^2$. 
The line $pq$ is a point $[pq]$ on $\mathbf{P}^2\setminus C$. 

We denote by $A$ the space $\mathbf{P}^2\setminus (C\cup\{[pq]\})$. The fiber space is isomorphic to 
\[\widetilde{F}(A,2)=\left\{([\ell_1],[\ell_2])\in F(A,2): pq,\ell_1,\ell_2 \mbox{ not lying on the same plane} \right\}/\mathfrak{S}_2.\]

The orientation of the simplicial bundle over $X_{8a}$ changes sign under the action of the generator of the fundamental group of $\mathbf{P}^2\setminus C$. That means we will compute the Borel-Moore homology of $\widetilde{F}(A,2)$ with respect to the system of coefficient which is locally isomorphic to $\mathbf{Q}$ but it changes sign when a point in the configuration moves along the loop generating the fundamental group of $\mathbf{P}^2\setminus C$. We denote this local system by $W$. The space $\widetilde{F}(A,2)$ can be identified as $(\mathbf{P}^2\setminus C)^2\setminus \Delta$ where 
\[\Delta=\left\{(x_1,x_2)\in(\mathbf{P}^2\setminus C)^2: x_1,x_2,[pq] \mbox{ lie on a line }\ell\subset\mathbf{P}^2\right\}.\]

We can compute the Borel-Moore homology of $\widetilde{F}(A,2)$ by the excision sequence associated to the pair $((\mathbf{P}^2\setminus C)^2,\Delta)$. As in \cite[3.4]{Tom}, we identify the space $\mathbf{P}^2\setminus C$ with $B(\mathbf{P}^1,2)$ by considering the intersections of a line polar to $x$ with $C$ for $x\in \mathbf{P}^2\setminus C$. Then $\oline{H}_\bullet((\mathbf{P}^2\setminus C)^2;W^{\otimes 2})$ is just $\mathbf{Q}(2)$ in degree $4$ which is $\mathfrak{S}_2$-invariant. Next, we now compute the Borel-Moore homology of $\Delta$ by considering the filtration on $\Delta$ as \cite[Section 3.4]{Tom}: $$\Delta=\Delta_3\supset\Delta_2\supset \Delta_1,$$ where
\begin{align*}
    \Delta_1=&\left\{(x_1,x_2)\in(\mathbf{P}^2\setminus C)^2:x_1=x_2=[pq] \right\},\\
    \Delta_2=&\left\{(x_1,x_2)\in(\mathbf{P}^2\setminus C)^2:x_1,x_2,[pq] \mbox{ lie on a tangent to } C\right\}.
\end{align*}
By abuse of notation, we will also denote the induced system of coefficients on $\Delta,\Delta_1,\Delta_2$ by $W$. Since $\Delta_1$ is just a point, $\oline{H}_\bullet(\Delta_1;W)$ is just $\mathbf{Q}$ in degree 0 and zero in all other degrees. 

The space $\Delta_2\setminus\Delta_1$ is disjoint union of two copies of $\mathbf{C}^2\setminus\{([pq],[pq])\}$ because there are only two tangents of $C$ through $[pq]$ and $x_1,x_2$ cannot both be $[pq]$, on each of the tangents. The system of coefficients $W$ on $\Delta_2\setminus\Delta_1$ will be the constant one and hence $\oline{H}_\bullet(\Delta_2\setminus\Delta_1;W)$ is $\mathbf{Q}(2)^2$ in degree $4$, $\mathbf{Q}^2$ in degree $1$ and zero in all other degrees. 

The space $\Delta_3\setminus \Delta_2$ is a fiber bundle over $\check{[pq]}\setminus \check{C}$ with fiber isomorphic to 
$(\mathbf{C}^*)^2\setminus\{([pq],[pq])\}.$
The local system $W$ restricted to $\mathbf{C}^*$ will be the one that alternates sign along a loop around $0$, which we denote by $T$. According to \cite[Lemma 2.15]{Tom}, $\oline{H}_\bullet(\mathbf{C}^*;T)=0$ and hence $\oline{H}_\bullet((\mathbf{C}^*)^2\setminus\{([pq],[pq])\};W)$ is $\mathbf{Q}$ in degree $1$ and zero in all other degrees. Notice that $\check{[pq]}\setminus \check{C}$ is isomorphic to $\mathbf{C}^*$ and the system of coefficient we have to consider on it is the constant one. This yields that $\oline{H}_\bullet(\Delta_3\setminus \Delta_2;W)$ is $\mathbf{Q}(1)$ in degree $3$, $\mathbf{Q}$ in degree $2$ and zero in all other degrees. 

The differentials between column $1$ to $3$ of the spectral sequence in Table \ref{tb:Delta} associated with the filtration on $\Delta$ has to be of full rank due to dimensional reasons (as $V_0\setminus \Sigma_0$ is affine of dimension $15$). As a result, $\oline{H}_\bullet(\Delta;W)$ is $\mathbf{Q}(2)^2$ in degree $4$, $\mathbf{Q}(1)$ in degree $3$ and zero in all other degrees. 

\begin{table}[ht!]\caption{Spectral sequence converging to $\oline{H}_\bullet(\Delta;W)$}\label{tb:Delta}\centering
\begin{tabular}{c|ccc}
$2$&&$\mathbf Q(2)^2$&\\
$1$&&&\\
$0$&&&$\mathbf Q(1)$\\
$-1$&$\mathbf{Q}$&$\mathbf{Q}^2$&$\mathbf{Q}$\\

\hline
&$\Delta_1$&$\Delta_2$&$\Delta_3$
\end{tabular}
\label{column_10}
\end{table}

To determine the rank of the induced map by inclusion $\oline{H}_4(\Delta;W)\xrightarrow[]{}\oline{H}_4((\mathbf{P}^2\setminus C)^2;W^{\otimes 2})$, we have to consider the inclusion of one of the components of $\Delta_2\setminus\Delta_1$ into $(\mathbf{P}^2\setminus C)^2$. Such an inclusion can be factored through the inclusion of $\ell^2$ into $(\mathbf{P}^2\setminus C)^2$, where $\ell$ is a tangent to $C$. Then the rank of $\oline{H}_4(\Delta;W)\xrightarrow[]{}\oline{H}_4((\mathbf{P}^2\setminus C)^2;W^{\otimes 2})$ is $1$ by the following lemma.

\begin{lm}\label{lm:inj_incl}
    Let $\ell$ be a tangent to the conic $C$ in $\mathbf{P}^2$. Then the local system $W$ restricted to $\ell$ will be the trivial one. The pushforward of the inclusion $\ell\subset \mathbf{P}^2\setminus C$ 
 \[i_*:\oline{H}_2(\ell;W)\longrightarrow\oline{H}_2(\mathbf{P}^2\setminus C;W)\]
 is injective (and hence isomorphism).
\end{lm}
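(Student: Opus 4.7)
The plan is, first, to observe that the restriction $W|_\ell$ is trivial for topological reasons, and then to deduce injectivity of $i_*$ from the Borel--Moore excision sequence for the closed inclusion $\ell\hookrightarrow\mathbf{P}^2\setminus C$, using Lemma \ref{lm_vas} to kill the open complement.

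Because $\ell$ is tangent to $C$, the scheme-theoretic intersection $\ell\cap C$ is a single (fat) point, hence set-theoretically $\ell\cap(\mathbf{P}^2\setminus C)\cong\mathbf{P}^1\setminus\{\mathrm{pt}\}\cong\mathbf{A}^1$. Any local system on a simply connected space is trivial, so $W|_\ell\cong\mathbf{Q}$ and $\oline{H}_2(\ell;W)\cong\oline{H}_2(\mathbf{A}^1;\mathbf{Q})=\mathbf{Q}(1)$. This matches the already computed rank of $\oline{H}_2(\mathbf{P}^2\setminus C;W)=\mathbf{Q}(1)$, so once $i_*$ is shown to be nonzero the stronger claim that it is an isomorphism follows automatically.

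For the nonvanishing, set $U:=\mathbf{P}^2\setminus(C\cup\ell)$ and use the excision long exact sequence
\begin{equation*}
\cdots\to\oline{H}_{i+1}(U;W)\to\oline{H}_i(\ell;W)\xrightarrow{i_*}\oline{H}_i(\mathbf{P}^2\setminus C;W)\to\oline{H}_i(U;W)\to\cdots.
\end{equation*}
Under the identification $\mathbf{P}^2\setminus C\cong B(\mathbf{P}^1,2)$ recalled above (with $W$ transported to the sign local system $\pm\mathbf{Q}$), the tangent line $\ell$ at a point $q_0\in C$ corresponds to the locus of unordered pairs $\{q_0,s\}$, since by polar reciprocity the polar of any $x\in\ell\setminus C$ passes through $q_0$. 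Hence $U$ gets identified with $B(\mathbf{P}^1\setminus\{q_0\},2)=B(\mathbf{C},2)$, on which $W$ is still the sign local system. Lemma \ref{lm_vas} then gives $\oline{H}_\bullet(U;W)=0$, and the long exact sequence forces $i_*$ to be an isomorphism. The only mildly subtle step is checking the compatibility of the local systems under the polar identification, which boils down to tracing through the $\mathfrak{S}_2$-cover $F(\mathbf{P}^1,2)\to B(\mathbf{P}^1,2)$ over $U$.
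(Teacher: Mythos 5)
Your proof is correct, and its skeleton --- trivializing $W|_\ell$ on the affine line $\ell\setminus\{q_0\}$ and then feeding the vanishing of Borel--Moore homology of the open complement $U=\mathbf{P}^2\setminus(C\cup\ell)$ into the excision sequence for the closed inclusion $\ell\subset\mathbf{P}^2\setminus C$ --- is the same as the paper's. The difference lies in how the complement is killed. The paper establishes only the single vanishing $\oline{H}_3(U;W)=0$, the one degree needed for injectivity, by appealing to $U$ being affine of complex dimension two. You instead transport everything through the polar identification $\mathbf{P}^2\setminus C\cong B(\mathbf{P}^1,2)$, observe that $\ell$ corresponds to the pairs containing the tangency point $q_0$ so that $U\cong B(\mathbf{C},2)$ with $W$ becoming the sign local system, and invoke Lemma \ref{lm_vas}. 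This buys you the full vanishing $\oline{H}_\bullet(U;W)=0$ in one stroke --- a statement the paper in fact needs separately later, for configurations of type 8b, where it is deduced from this lemma plus excision --- and it is arguably the more robust route: for a smooth affine surface the standard dimension argument (Artin vanishing combined with Poincar\'e duality) kills $\oline{H}_k$ for $k<2$, whereas what is needed here is $\oline{H}_3(U;W)\cong H^1(U;W)$, so some input beyond affineness (such as your explicit identification, or the nontrivial monodromy of $W$) is genuinely being used. Your final remark about checking the compatibility of $W$ with $\pm\mathbf{Q}$ under the $\mathfrak{S}_2$-cover $F(\mathbf{P}^1,2)\to B(\mathbf{P}^1,2)$ is exactly the point that makes Lemma \ref{lm_vas} applicable, so it is right to single it out.
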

\begin{proof}
    First we know that $\oline{H}_2(\ell;W)$ and $\oline{H}_2(\mathbf{P}^2\setminus C;W)$ are $\mathbf{Q}(1)$. Since $\mathbf{P}^2\setminus (C\cup\ell)$ is affine of complex dimension two, $\oline{H}_3(\mathbf{P}^2\setminus (C\cup\ell);W)=0$. It yields then the injectivity from the long exact sequence.
\end{proof}

Hence we conclude that $\oline{H}_\bullet(\widetilde{F}(A,2);W)$ is $\mathbf{Q}(2)$ in degree $5$ and $\mathbf{Q}(1)$ in degree $4$. In addition, by Lemma~\ref{lm:char_con}, we know that $\oline{H}_\bullet(\mathbf{P}^3\setminus (Q\cup T_pQ);\mathbf{Q})$ is $\mathbf{Q}(3)$ in degree $6$, $\mathbf{Q}(2)$ in degree $5$ and zero in all other degrees. As a result, we have
\begin{equation*}
   \oline{H}_i(X_{8a};\pm\mathbf{Q})=\begin{cases}
      \mathbf{Q}(5) &i=11;\\
      \mathbf{Q}(4)^2 &i=10;\\
      \mathbf{Q}(3) &i=9;\\
      0 &\mbox{otherwise.}
   \end{cases} 
\end{equation*}

\subsubsection{Configuration of type 8b}
The space $X_{8b}$ is a fiber bundle over $\check{\ell_1}\cup \check{\ell_2}$ where $\ell_1,\ell_2$ are the two lines of the rulings through $p$. The space $\check{\ell_1}\cup \check{\ell_2}$ parametrizes reducible conics (i.e. union of two rulings) containing $p$ so it parametrizes the points $x$ on $Q$ whose tangent plane $H_x$ passes through $p$. Once we fix the reducible conic containing $p$ we can recover the configuration by picking three concurrent lines in $\mathbf{P}^3$ not through $p$. Two of them have to be on the hyperplane $H_x$ on which the chosen reducible conic lies. We denote the configuration space parametrizing the choice of the two lines by $M$. These two lines will determine a vertex $q$ and the remaining line will be chosen to be through $q$ while not lying on $H_x$. This implies that the fiber of $X_{8b}\longrightarrow\check{\ell_1}\cup \check{\ell_2}$ is fibered over $M$ with fiber isomorphic to $\mathbf{P}^2\setminus (C\cup\ell)$, where $C$ is a conic on $\mathbf{P}^2$ and $\ell$ is a tangent to $C$. This is because on the space parametrizing lines through $q$ not tangent to $Q$, the elements corresponding to lines on a certain tangent plane of $Q$ will form a line in which there is a unique element $qx$ corresponding to a line tangent to $Q$. The coefficient $\pm\mathbf{Q}$ restricted to $\mathbf{P}^2\setminus (C\cup\ell)$, which we denote by $W$, will be the one changing sign along a loop generating the fundamental group of $\mathbf{P}^2\setminus C$. Hence, by Lemma~\ref{lm:inj_incl} and the excision sequence, we conclude that $\oline{H}_\bullet(\mathbf{P}^2\setminus (C\cup\ell);W)=0$ and $\oline{H}_\bullet(X_{8b};\pm\mathbf{Q})=0$.

\subsubsection{Configuration of type 8c}
The space $X_{8c}$ is a fiber bundle over the space that parametrizes non-singular conics on $Q$ through $p$, which by Lemma~\ref{lm:char_con} is $\check{p}\setminus(\check{\ell_1}\cup\check{\ell_2})$ and 
quasi-isomorphic to $\mathbf{C}^2\setminus\mathbf{C}^1$. The fiber of the projection to a non-singular conic through $p$ is again a fiber space over the space parametrizing four points $\{z_i\}_{1=1,\dots,4}\subset C\setminus\{p\}$.

These four points determine two of the three lines of the configuration $z_1z_2$ and $z_3z_4$ and their intersection point $q=z_1z_2\cap z_3z_4.$
This space is the quotient of 
$F(\mathbf{C},4)$ by the action of the subgroup $L=\langle(12),(34),(13)(24)\rangle$ of $\mathfrak{S}_4$, which contains a normal subgroup isomorphic to $\mathfrak{S}_2\times\mathfrak{S}_2.$ We can actually view $L$ as the dihedral group $D_4$. The system of coefficients $\pm\mathbf{Q}$ restricted to $F(\mathbf{C},4)/L$ is the one changing sign along loops interchanging only the first or the second pair of points, which we will denote again by $\pm\mathbf{Q}.$ Then $\oline{H}_\bullet(F(\mathbf{C},4)/L;\pm\mathbf Q)$ can be identified with the part of $\oline{H}_\bullet(F(\mathbf{C},4);\mathbf Q)$, as $\mathfrak{S}_4$-representations, whose restriction to both factors of $\mathfrak{S}_2$ in $\mathfrak{S}_2\times\mathfrak{S}_2$ corresponds to the sign representation $\mathbf{S}_{1,1}$. Comparing the character tables of $D_4$ and $\mathfrak S_4,$ \cite{Ser}, such $\mathfrak{S}_4$-representations are $\mathbf S_{2,2}$ and $\mathbf S_{2,1^2}.$
By Lemma~\ref{lm:FC_34}, 
      \begin{equation*}     \oline{H}_i(F(\mathbf{C},4)/L;\pm\mathbf{Q})=\begin{cases}
         \mathbf{Q}(3) & i=7;\\
         \mathbf{Q}(2)^2 & i=6;\\
         \mathbf{Q}(1) & i=5;\\
         0 & \mbox{otherwise.}
     \end{cases}
    \end{equation*}

The fiber $\mathcal F$ will be the space parametrizing the remaining line, which does not lie on the hyperplane through $q$ containing the non-singular conic, therefore it is isomorphic to $\mathbf{P}^2\setminus(C\cup\ell)$ for $\ell$ not tangent to $C$.
The coefficient $\pm\mathbf{Q}$ restricted to the fiber $\mathbf{P}^2\setminus(C\cup\ell)$ will be denoted by $W$ and it will change sign along a loop generating the fundamental group of $\mathbf{P}^2\setminus C$. Since $\ell$ intersects $C$ at two points, we can consider the following long exact sequence:
\begin{align*}
    ...\longrightarrow \oline{H}_i(\mathbf{C}^*,T)\longrightarrow \oline{H}_i(\mathbf{P}^2\setminus C;W)\longrightarrow \oline{H}_i(\mathbf{P}^2\setminus (C\cup \ell);W)\longrightarrow...
\end{align*}
As $\oline{H}_\bullet(\mathbf{C}^*,T)=0$, we have $\oline{H}_\bullet(\mathbf{P}^2\setminus (C\cup \ell);W)=\oline{H}_\bullet(\mathbf{P}^2\setminus C;W)$ which is $\mathbf{Q}(1)$ in degree 2 and zero in all other degrees. By tensoring the Borel-Moore homology groups of the base and fiber spaces, we have
\begin{equation*}
   \oline{H}_i(X_{8c};\pm\mathbf{Q})=\begin{cases}
      \mathbf{Q}(6) &i=13;\\
      \mathbf{Q}(5)^3 &i=12;\\
      \mathbf{Q}(4)^3 &i=11;\\
      \mathbf{Q}(3) &i=10;\\
      0 &\mbox{otherwise.}
   \end{cases} 
\end{equation*}

Similarly to \cite[Section 3.4]{Tom}, the top degree class of $\oline{H}_\bullet (F_{8c};\mathbf Q)$ is anti-invariant for $\nu.$

\subsubsection{The whole configuration space of type 8}

In oder to determine the Borel-Moore homology of $X_8$ from that of $X_{8x}$ and $X_{8c},$ we consider an auxiliary configuration space $Y$ which parametrizes irreducible conics on $Q$ through $p$ and three concurrent lines in $\mathbf{P}^3$ such that 
\begin{itemize}
    \item two of them intersects the conic at two pairs of points;
    \item the other one intersects $Q$ at two points and does not intersect the conic.
\end{itemize}
There is a proper forgetful map $f:Y\longrightarrow X_{8a}\cup X_{8c}$ which forgets the irreducible conics.

  The space $X_{8c}$ is embedded in both $Y$ and $X_{8a}\cup X_{8c}$. We have then the map between excision pairs $(Y,Y\setminus X_{8c})$ and $(X_{8a}\cup X_{8c}, X_{8a})$, i.e. the following commutative diagram.
  \begin{center}
      \begin{tikzcd}
          Y\setminus X_{8c}\arrow[r,"i'"]\arrow[d,"f"] &Y\arrow[d,"f"]\\
          X_{8a}\arrow[r,"i"] &X_{8a}\cup X_{8c}
      \end{tikzcd}
  \end{center}

Note that the map $f:Y\setminus X_{8c}\xrightarrow[]{}X_{8a}$ is a double cover branched over the closed loci of configurations containing $p$ as intersection of an irreducible conic with a reducible conic. Thus, it induces a surjective map on Borel-Moore homology.  Then the above diagram gives the following commutative diagrams on the excision sequences:
\begin{center}\begin{equation}\label{exc}
    \begin{tikzcd}
        ... \arrow[r]& \oline{H}_i(Y;\pm\mathbf{Q})\arrow[r]\arrow[d] &\oline{H}_i(X_{8c};\pm\mathbf{Q})\arrow[r,"\delta_i'"]\arrow[d,equal]&\oline{H}_{i-1}(Y\setminus X_{8c};\pm\mathbf{Q})\arrow[r]\arrow[d,two heads] &...\\
        ...\arrow[r] &  \oline{H}_i(X_{8a}\cup X_{8c};\pm\mathbf{Q})\arrow[r] & \oline{H}_i(X_{8c};\pm\mathbf{Q})\arrow[r,"\delta_i"]&\oline{H}_{i-1}(X_{8a};\pm\mathbf{Q})\arrow[r] &...\\
    \end{tikzcd}\end{equation}
\end{center}

We can now understand the boundary map $\delta_i$ by investigating the boundary map $\delta_i'$. We first notice that the embedding $X_{8c}\xhookrightarrow{} Y$ is compatible with the fiber bundle description in the subsection for type 8c. Indeed, $Y$ is a fiber bundle over $\check{p}\setminus(\check{\ell_1}\cup\check{\ell_2})$ and the fiber of $Y\xrightarrow[]{}\check{p}\setminus(\check{\ell_1}\cup\check{\ell_2})$ is fibered over $F(\mathbf{P}^1,4)/L$ with fiber isomorphic to $\mathbf{P}^2\setminus(C\cup\ell)$ where $\ell$ intersects $C$ at two points. We denote the fibers of $X_{8c}$ resp. $Y$ over $\check{p}\setminus(\check{\ell_1}\cup\check{\ell_2})$ by $\mathcal{F}_{8c}$ resp. $\mathcal{F}_Y$. By considering $\mathbf{C}$ as $\mathbf{P}^1\setminus\{p\}$, we have the following commutative diagram.
  \begin{center}
\begin{tikzcd}
  \mathcal{F}_{8c} \arrow[r,"i'"]\arrow[d] & \mathcal{F}_Y\arrow[d]\\
  M:=F(\mathbf{C},4)/L\arrow[r,"i "]& F(\mathbf{P}^1,4)/L
  \end{tikzcd} 
  \end{center}

  We denote the closed subspaces $(F(\mathbf{P}^1,4)\setminus F(\mathbf{C},4))/L\subset F(\mathbf{P}^1,4)/L$ resp. $\mathcal{F}_Y\setminus\mathcal{F}_{8c} \subset \mathcal{F}_Y$ by $\partial M$ resp. $\partial \mathcal{F}_{8c}$. By \cite[Lemma 3.3]{Tom}, the Borel-Moore homology of $F(\mathbf{P}^1,4)/L$ with twisted coefficient is $\mathbf{Q}(1)$ in degree $4$ and $\mathbf{Q}(3)$ in degree $7$. We obtain the short exact sequence from the excision sequence associated with $(F(\mathbf{P}^1,4)/L,\partial M)$: 
\begin{align*}
        \oline{H}_6(F(\mathbf{C},4)/L;\pm\mathbf{Q})\xrightarrow[]{\sim} \oline{H}_5(\partial M;\pm\mathbf{Q}).
\end{align*}
This implies that 
\begin{align*}   \oline{H}_8(\mathcal{F}_{8c};\pm\mathbf{Q})&\xrightarrow[]{\sim} \oline{H}_7(\partial \mathcal{F}_{8c};\pm\mathbf{Q}).
\end{align*}
By considering the fiber bundle structures of $X_{8c}$ and $Y$ over $\check{p}\setminus(\check{\ell_1}\cup\check{\ell_2})$, we have
\begin{align*}       
\delta'_{12}:\oline{H}_{12}(X_{8c};\pm\mathbf{Q})&\xrightarrow[]{\sim} \oline{H}_{11}(Y\setminus X_{8c};\pm\mathbf{Q}),
\end{align*} 
from the excision sequence associated with $(Y,Y\setminus X_{8b})$.
This implies that $\delta_{12}:\oline{H}_{12}(X_{8c};\pm\mathbf{Q})\xrightarrow[]{}\oline{H}_{11}(X_{8a};\pm\mathbf{Q})$ is surjective.

One can check that, because of the divisibility argument of Theorem \ref{thm:gen_leray_hirsch},
Because of the divisibility property 
As a result, we have that the boundary maps $\delta_i$ in \eqref{exc} are surjective for $i=11,10$ as well.
As a result, we have
\begin{equation*}
     \oline{H}_i(X_8;\pm\mathbf{Q})=\begin{cases}
         \mathbf{Q}(6) & i=13;\\
         \mathbf{Q}(5)^2 & i=12;\\
         \mathbf{Q}(4)  & i=11;\\
         0 & \mbox{ otherwise.}
     \end{cases}
      \end{equation*}

\subsection{Column (9)}
The space $X_9$ consists of configurations of seven points on the quadric surface $Q$, such that one point is the intersection of two lines of distinct rulings $\ell_1,\ell_2$ and the other six points are just the intersections among $\ell_1\cup\ell_2$ and two other conics. In addition, we require that the fixed point $p$ lies on the union of the rulings and conics. Notice that the first point in a configuration can be actually recovered from the other six points which can be considered as the intersections of three concurrent lines in $\mathbf{P}^3$ with $Q$, such that two of them lie on a tangent plane to $Q$. We can have the following possibilities:
\begin{itemize}
    \item[(9a)] The point $p$ lies on a line of the rulings.
    \item[(9b)] The point $p$ is in the configuration and it is an intersection point of the two conics.
    \item[(9c)] The point $p$ is {\em not} in the configuration and it lies on one of the conics.
\end{itemize}
\subsubsection{Configuration of type 9a}
The point $p$ uniquely determines (at least) a line $l$ of the ruling passing through it.
The configuration space $X_{9a}$ is then fibered over the space parametrizing 3-points configurations on $l$. The first point corresponds to the intersection with the line of the other ruling while the other two points are the intersections with the two conics. The conics are then determined by the intersection points with each other because a conic on $Q$ is uniquely determined by three points. Thus, the fiber will be isomorphic to $(F(\mathbf{C},2)\times F(\mathbf{C},2))/\mathfrak{S}_2$ because the intersection points of the two conics on $Q$ lies on the complement of the two rulings and they cannot both lie on any ruling. The coefficient $\pm\mathbf{Q}$ on $X_{9a}$ restricted to the fiber will be also denoted by $\pm\mathbf{Q}$. It is locally isomorphic to $\mathbf{Q}$ but changes sign with respect to the action of $\mathfrak{S}_2$. As $\oline{H}_\bullet(B(\mathbf{C},2);\pm\mathbf{Q})=0$, we also have $\oline{H}_\bullet((F(\mathbf{C},2)\times F(\mathbf{C},2))/\mathfrak{S}_2;\pm\mathbf{Q})=0$ and $\oline{H}_\bullet(X_{9a};\pm\mathbf{Q})=0$.

\subsubsection{Configuration of type 9b}
The configuration space of type $9b$ can be just interpreted similarly to that of type $8a$, i.e. it corresponds to three concurrent lines in $\mathbf{P}^3$ such that one of them through $p$. Furthermore, we require that the other two lines lie on a tangent plane to $Q$.
Thus, $X_{9b}$ is fibered over $\mathbf{P}^3\setminus (Q\cup T_pQ)$ which is the space parametrizing the choices of the vertex $q$. The space parametrizing the lines through $q$ is just $\mathbf{P}(T_q\mathbf{P}^3)\cong \mathbf{P}^2$ while the subspace containing lines through $q$ tangent to $Q$ is a conic $C\subset \mathbf{P}^2$. Once we pick the vertex $q$, we need to pick a tangent plane to $Q$ containing $q$ but not $pq$. This corresponding to picking a tangent line to $C\subset\mathbf{P}^2$ but not through $[pq]\notin C$. The space parametrizing such choices is just the conic $C$ without two points, i.e. isomorphic to $\mathbf{C}^*$. The last thing to choose is two points on the chosen tangent line but not on $C$. The space parametrizing such choice is just isomorphic to $B(\mathbf{C},2)$. Hence, the fiber of $X_{9b}\xrightarrow[]{} \mathbf{P}^3\setminus (Q\cup T_pQ)$ is isomorphic to $\mathbf{C}^*\times B(\mathbf{C},2)$. Notice that the coefficient $\pm\mathbf{Q}$ is constant on the base $\mathbf{P}^3\setminus (Q\cup T_pQ)$, and the fiber $\mathbf{C}^*\times B(\mathbf{C},2)$ because moving the vertex, and the tangent plane will not change the orientation of the simplicial bundle over $X_{9b}$ while interchanging the lines will permutate two pairs of points. By tensoring $\oline{H}_\bullet(\mathbf{P}^3\setminus (Q\cup T_pQ);\mathbf{Q})$, $\oline{H}_\bullet(\mathbf{C}^*;\mathbf{Q})$ and $\oline{H}_\bullet(B(\mathbf{C},2);\mathbf{Q})$, we have
\begin{equation*}
    \oline{H}_i(X_{9b};\pm\mathbf{Q})=\begin{cases}
    \mathbf{Q}(6) &i=12;\\
    \mathbf{Q}(5)^3 &i=11;\\
    \mathbf{Q}(4)^3 &i=10;\\
    \mathbf{Q}(3) &i=9;\\
    0 &\mbox{otherwise.}
    \end{cases}
\end{equation*}

Notice that all top degree classes of $\oline{H}_\bullet(\mathbf{P}^3\setminus (Q\cup T_pQ);\mathbf{Q})$, $\oline{H}_\bullet(\mathbf{C}^*;\mathbf{Q})$ and $\oline{H}_\bullet(B(\mathbf{C},2);\mathbf{Q})$ are invariant with respect to $\nu,$ while all the other classes are anti-invariant. This yields that $\oline{H}_{12}(X_{9b};\mathbf Q)$ is $\nu$-invariant while all classes in $\oline{H}_{11}(X_{9,b};\mathbf Q)$ are anti-invariant.
Finally, the action of $\nu$ interchanges two pairs of points. Hence, the Borel-Moore homology of the fiber of the bundle $\Phi_{9b}\rightarrow X_{9b}$ is also invariant for $\nu.$ We conclude that $\oline{H}_{20}(F_{9b};\mathbf Q)$ and $\oline{H}_{19}(F_{9b};\mathbf Q)$ are respectively invariant and anti-invariant, for $\nu.$

\subsubsection{Configuration of type 9c}
The space $X_{9c}$ is a fiber bundle over the space $$A=\{(v,q)\in Q\times \mathbf{P}^3: q \in H_v\setminus Q \mbox{ and } p\notin H_v \},$$ where $H_v$ is the tangent plane of $Q$ at $v$. The space $A$ parametrizes the choice of the hyperplane containing the two lines of distinct rulings on $Q$ and the intersection point of the three concurrent lines in $\mathbf{P}^3$. The fiber $\mathcal F$ of $X_{9c}$ over a point $(v,q)$ parametrizes three lines through $q$ such that two of them lie on $T_v Q$ and the marked point $p$ lies on the hyperplane spanned by the remaining line and one of the two lines on $T_vQ$. The fiber space is isomorphic to the configuration space of picking three points on $\mathbf{P}(T_q\mathbf{P}^3)\cong \mathbf{P}^2$ minus the conic $C$ such that the configuration consists of
\begin{itemize}
    \item a point $x_1$ does not lie on the tangent line $\ell_1$ to $C$ at $[qv]$ and not on the line $\ell_2$ through $[qv]$ and $[qp]$;
    \item a point $x_2$ which is the intersection of $\ell_1$ with the line through $[qx_1]$ and $[qp]$;
    \item a point $x_3$ on $\ell_1\setminus \{[qv],x_2\}$.
\end{itemize}
The requirement of $x_1$ not lying on $\ell_2$ is to rule out the case that the conic on $Q$ containing $p$ passes through the point $v$, which is the intersection point of the two lines of distinct rulings. The description above implies that $\mathcal F$ is again fibered over $\mathbf{P}^2\setminus(C\cup \ell_1\cup \ell_2)$ with fiber isomorphic to $\mathbf{C}^*$. Notice that the $\mathcal F$ is an oriented $\mathbf C^*$-bundle because the point $[qv]$ removed from $\ell_1$ is a fixed point. Thus, the local systems on $\mathbf{P}^2\setminus(C\cup \ell_1\cup \ell_2)$ whose stalks are $\oline{H}_2(\mathbf{C}^*;\mathbf{Q})$ and $\oline{H}_1(\mathbf{C}^*;\mathbf{Q})$ are just the constant coefficient $\mathbf{Q}$. This implies that if $\oline{H}_\bullet(\mathbf{P}^2\setminus (C\cup\ell_1\cup\ell_2);W)=0$ then also $\oline{H}_\bullet(X_{9c};\pm\mathbf Q)=0$. Furthermore, $W$ restricted to $\ell_2\setminus 2\mbox{pts}\cong \mathbf{C}^*$ is just $T$ which is locally isomorphic to $\mathbf{Q}$ but changes sign along a simple loop around the origin. Recall that $\oline{H}_\bullet(\mathbf{C}^*;T)=0$. We consider the following long exact sequence:
$$...\rightarrow \oline{H}_k(\ell_2\setminus 2\mbox{pts};W)\rightarrow \oline{H}_k(\mathbf{P}^2\setminus (C\cup\ell_1);W)\rightarrow \oline{H}_k(\mathbf{P}^2\setminus (C\cup\ell_1\cup\ell_2);W)\rightarrow ... $$
By Lemma~\ref{lm:inj_incl}, $\oline{H}_\bullet(\mathbf{P}^2\setminus (C\cup\ell_1);W)=0$ and thus $\oline{H}_\bullet(\mathbf{P}^2\setminus (C\cup\ell_1\cup\ell_2);W)=0.$ As a result, we can conclude that $\oline{H}_\bullet(X_{9c};\pm\mathbf Q)=0$ and $\oline{H}_\bullet(X_9;\pm\mathbf Q)\cong \oline{H}_\bullet(X_{9b};\pm\mathbf Q)$.

\subsection{Column (10) (F)}
Depending on the position of the point $p$ with respect to the singular curve, we the following cases:

\begin{itemize}
    \item[(10a)] the point $p$ is a point of the conic; 
    \item[(10b)] the point $p$ does not lie on the conic, which is degenerate; 
    \item[(10c)] the point $p$ does not lie on the conic, which is irreducible.
\end{itemize}

For each of these possibilities we compute the twisted Borel-Moore homology of the corresponding configuration space and then consider the spectral sequence associated to this stratification in order to obtain the twisted Borel-Moore homology of $X_{10}.$

\subsubsection{Configuration of type 10a}
The computation of the twisted Borel-Moore homology of the space $X_{10a}$ is very similar to that in \cite[Section 3.5]{Tom}. The space $X_{10a}$ can be considered as a fiber bundle over the space of conics passing through $p.$
If the conic were degenerate, we would have a fiber isomorphic to either $B(\mathbf C,2)\times B(\mathbf C^*,2)$ or $B(\mathbf C,2)\times B(\mathbf C,2).$ The twisted Borel-Moore homology of $B(\mathbf C,2)$ is trivial, \cite[Lemma 2]{Vas}, therefore we may assume the conic to be irreducible and the base space of the fiber bundle isomorphic to $T_p(\mathbf P^3)\setminus (\check{Q}\cap\check{p}),$ whose twisted Borel-Moore homology is $\mathbf Q(2)$ in degree $4,$ $\mathbf Q(1)$ in degree $3$ and zero in all other degrees.
The fiber and the local system of coefficients we need to consider are precisely the same as in \cite[3.5]{Tom}. Then by \cite[Lemmas 3.2 and 3.3]{Tom} we have that $\oline{H}(X_{10a};\pm\mathbf Q)$ is precisely that represented in the first column of Table \ref{column_10}. Notice also that also in this case the top degree class of $\oline{H}_\bullet(F_{10a})$ is anti-invariant with respect to $\nu$.

\subsubsection{Configuration of type 10b}
The space $X_{10b}$ can be considered as a fiber bundle over the space of degenerate conics not passing through $p.$

The base space is isomorphic to $\check Q\setminus (\check{Q}\cap\check{p})\cong \mathbf C^2.$
The fiber is then defined by pairs of lines of rulings, at least one passing though $p.$ The space parametrizing the pairs of lines of the ruling such that exactly one passes through $p$ is isomorphic to the configuration of $3$ points on each line on the degenerate conic, minus the singular point. In instead $p$ is the point of intersection of two lines of distinct rulings, then two lines are uniquely determined and each of the other two corresponds to a point on the component of the degenerate conic corresponding to the other ruling, minus the singular point, and the point corresponding to the line through $p.$ The fiber then is isomorphic to $(B(\mathbf C,2)\times B(\mathbf C,2))^{\oplus2}\setminus (\mathbf{C}^*\times \mathbf{C}^*).$ 

The twisted Borel-Moore homology of $X_{10b}$ then is just the tensor product of the twisted Borel-Moore homology of the base space and of the fiber, represented in the second column of Table \ref{column_10}.

\subsubsection{Configuration of type 10c}
The space $X_{10c}$ can be considered as a fiber bundle over the space of irreducible conics not passing through $p.$ We have two choices for a line through $p$ that determines a line in the configuration. The space of the remaining lines is then given by $F(\mathbf C,2)$ the configurations of three points on the conic, distinct from the intersection with the line through $p$. We are also double counting the case in which $p$ is the intersection of two lines. The fiber $\Psi$ is then isomorphic to the complement of $F(\mathbf{C}^*,2)$ in a space that is itself isomorphic to two disjoint copies of $F(\mathbf C,3),$ let us denote by $i$ this inclusion.

The local system $R$ that we need to consider on the fiber is the one changing its sign along the loops interchanging a pair of points corresponding to two lines of the same ruling, not through $p.$ Therefore we have that $\oline{H}_k(\Psi; R)$ can be computed from the exact sequence 
$$\dots\rightarrow\oline{H}_k(F(\mathbf{C}^*,2);\mathbf Q)\rightarrow \oplus_{i=1}^2\oline{H}_k(F(\mathbf C,3);i_*R)\rightarrow \oline{H}_k(\Psi; R)\rightarrow\dots$$
where $\oline{H}_\bullet(F(\mathbf C,3);i_*R)$ is the invariant part of $\oline{H}_\bullet(F(\mathbf C,3);\mathbf Q)$ with respect to the action of a transposition $(12).$ By \cite[Lemma 2.12]{Tom} and Lemma \ref{lm:FC_34}, we have that $\oline{H}_k(\Psi; R)$is $\mathbf{Q}(2)^3$ in degree $5$, $\mathbf{Q}(1)^5$ in degree $4$, $\mathbf{Q}^2$ in degree $3$ and zero in all other degrees. Tanking the tensor product with the Borel-Moore homology of the base space yields the description in the third column of Table \ref{column_10}.
Notice also that, from a similar argument to \cite[Column]{Tom}, the top degree class of $F_{10c}$ is also anti-invariant for $\nu$.

\begin{table}[ht!]\caption{Spectral sequence converging to $\oline{H}_\bullet(X_{10};\pm\mathbf Q)$}\centering
\begin{tabular}{c|ccc}
$10$&$\mathbf Q(5)$&&\\
$9$&$\mathbf Q(4)$&&\\
$8$&&&$\mathbf Q(5)^3$\\
$7$&$\mathbf Q(3)$&$\mathbf Q(4)$&$\mathbf Q(4)^8$\\
$6$&$\mathbf Q(2)$&$\mathbf Q(3)^2$&$\mathbf Q(3)^7$\\
$5$&&$\mathbf Q(2)$&$\mathbf Q(2)^2$\\
\hline
&(10a)&(10b)&(10c)
\end{tabular}
\label{tb: column_10}
\end{table}

Let us notice that the differentials in Table \ref{tb: column_10} have maximal rank because of the divisibility argument of Theorem \ref{thm:gen_leray_hirsch}. 
Then the twisted Borel-Moore homology of $X_{10}$ is $\mathbf{Q}(5)^4$ in degree $11$, $\mathbf{Q}(4)^8$ in degree $10$, $\mathbf{Q}(3)^4$ in degree $9$ and zero otherwise.

The Borel-Moore homology of $F_{10}$ is $\mathbf{Q}(6)^4$ in degree $20$, $\mathbf{Q}(5)^8$ in degree $19$, $\mathbf{Q}(4)^4$ in degree $18$ and zero otherwise. Also recall that since both top degree classes of $F_{10a}$ and $F_{10c}$ are anti-invariant with respect to $\nu,$ the classes $\oline{H}_{20}(F_{10};\mathbf Q)$ are also $\nu$-anti-invariant.

\subsection{Column 11}

By Proposition \ref{prop:Gor} the space $F_{11}$ is an open cone and the Borel-Moore homology of its base space can be computed from the spectral sequence in Table \ref{opencone_11}.

\begin{table}[ht!]\caption{Spectral sequence converging to the Borel-Moore homology of the base of $F_{11}.$}\centering
\begin{tabular}{c|ccccccccccc}
$12$&&&&&&&&$\mathbf Q(7)$&&\\
$11$&&&&&&&&$\mathbf Q(6)^2$&$\mathbf Q(7)$&\\
$10$&&&&&&&&$\mathbf Q(5)$&$\mathbf Q(6)^3$&$\mathbf Q(6)^4$\\
$9$&&&&&&&&&$\mathbf Q(5)^3$&$\mathbf Q(5)^8$\\
$8$&&&&&&&&&$\mathbf Q(4)$&$\mathbf Q(4)^4$\\
$7$&&&&&&&&&&\\
$6$&&&&&&&&&&\\
$5$&&&&&&&&&&\\
$4$&&&&&&$\mathbf{Q}(4)$&$\mathbf{Q}(4)$&&&\\
$3$&&&&$\mathbf{Q}(3)^2$&$\mathbf{Q}(3)^2$&&&&&\\
$2$&&$\mathbf{Q}(2)$&$\mathbf{Q}(2)$&&&&&&&\\
$1$&&&&$\mathbf{Q}(2)$&$\mathbf{Q}(2)$&&&&\\
$0$&&$\mathbf{Q}(1)^2$&$\mathbf{Q}(1)^2$&&&&&&&\\
$-1$&$\mathbf{Q}$&&&&&&&&&\\
\hline
&$(1)$&$(2)$&$(3)$&$(4)$&$(5)$&$(6)$&$(7)$&$(8)$&$(9)$&$(10)$
\end{tabular}
\label{opencone_11}
\end{table}
The differentials among the first $7$ columns are all of maximal rank for dimensional reasons. The space $X_0$ is affine of dimension $15$ and if any of these differential were not trivial, it would give a non-trivial element in $\oline{H}_{j}(\Sigma_0;\mathbf Q)$, with $j\leq 11,$ 
which corresponds, by Alexander duality, to a non-trivial element in ${H}^{k}(\Sigma_0;\mathbf Q)$, with $k= 30-j-1>15.$ 

The differentials among the last three columns, instead, are all trivial because otherwise this would contradict the divisibility by the cohomology of the group. 

The Borel-Moore homology of $F_{11}$ then is 
$\mathbf Q(6)^2+\mathbf Q(5)^4$ in degree $19,$ $\mathbf Q(5)^5+\mathbf Q(4)^8$ in degree $18,$ $\mathbf Q(4)^4+\mathbf Q(3)^4$ in degree $17,$ $\mathbf Q(3)$ in degree $16,$ and zero in all other degrees.

\subsection{Vassiliev's spectral sequence}
Here in Table \ref{tb:main} is represented the spectral sequence associated to the projection $\pi_0:\mathcal I_0\rightarrow Z_0\cong\mathbf P^1\times \mathbf P^1,$ where the cohomology of the fiber if the one obtained in the previous subsections.

\begin{table}[ht!]\caption{$E_2$-term of the spectral sequence converging to ${H}^\bullet(\mathcal I_0;\mathbf Q).$}\centering\footnotesize
\begin{tabular}{c|ccccc}
$13$&$\mathbf Q(-12)$&&$\mathbf Q(-13)^2$&&$\mathbf Q(-14)$\\
$12$&$\mathbf Q(-11)^{5}+\mathbf Q(-12)^4$&&$\mathbf Q(-12)^{10}+\mathbf Q(-13)^8$&&$\mathbf Q(-13)^{5}+\mathbf Q(-14)^4$\\
$11$&$\mathbf Q(-10)^9+\mathbf Q(-11)^{12}$&&$\mathbf Q(-11)^{18}+\mathbf Q(-12)^{24}$&&$\mathbf Q(-12)^9+\mathbf Q(-13)^{12}$\\
$10$&{$\mathbf Q(-7)$}$+$ {$\mathbf Q(-9)$} $^7+\mathbf Q(-10)^{12}$&&$\mathbf Q(-8)^2+\mathbf Q(-10)^{14}+\mathbf Q(-11)^{24}$&&$\mathbf Q(-9)+\mathbf Q(-11)^7+\mathbf Q(-12)^{12}$\\
$9$&{$\mathbf Q(-6)$}$+${$\mathbf Q(-8)^2+\mathbf Q(-9)^4$}&&$\mathbf Q(-7)^2+\mathbf Q(-9)^4+\mathbf Q(-10)^8$&&$\mathbf Q(-8)+\mathbf Q(-10)^2+\mathbf Q(-11)^4$\\
$8$&&&&&\\
$7$&{$\mathbf Q(-5)^3$}&&$\mathbf Q(-6)^6$&&{$\mathbf Q(-7)^3$}\\
$6$&{$\mathbf Q(-4)^3$}&&$\mathbf Q(-5)^6$&&{$\mathbf Q(-6)^3$}\\
$5$&&&&&\\
$4$&{$\mathbf Q(-3)^3$}&&$\mathbf Q(-4)^6$&&{$\mathbf Q(-5)^3$}\\
$3$&{$\mathbf Q(-2)^3$}&&$\mathbf Q(-3)^6$&&{$\mathbf Q(-4)^3$}\\
$2$&&&&&\\
$1$&$\mathbf Q(-1)$&&$\mathbf Q(-2)^2$&&$\mathbf Q(-3)$\\
$0$&$\mathbf Q$&&$\mathbf Q(-1)^2$&&$\mathbf Q(-2)$\\
\hline
&$0$&$1$&$2$&$3$&$4$
\end{tabular}
\label{tb:main}
\end{table}

The rank of the differentials in the rows 9--13 is determined by the divisibility argument of Theorem \ref{thm:gen_leray_hirsch}.

This yields that the fourth page of the spectral sequence is the one represented in Table \ref{tb:main2}.

\begin{table}[ht!]\caption{$E_4$-term of the spectral sequence converging to ${H}^\bullet(\mathcal I_0;\mathbf Q).$}\centering\footnotesize
\begin{tabular}{c|ccccc}
$13$&&&&&$\mathbf Q(-14)$\\
$12$&&&$\mathbf Q(-12)^{2}$&&$\mathbf Q(-13)^{3}+\mathbf Q(-14)^4$\\
$11$&$\mathbf Q(-10)$&&$\mathbf Q(-11)^{6}+\mathbf Q(-12)^{8}$&&$\mathbf Q(-12)^2+\mathbf Q(-13)^{4}$\\
$10$&$\mathbf Q(-7)+$\colorbox{yellow!25}{$\mathbf Q(-9)^3$} $+\mathbf Q(-10)^{4}$\tikzmark{x1}&&$\mathbf Q(-8)^2+\mathbf Q(-10)^{4}+\mathbf Q(-11)^{8}$&&$\mathbf Q(-9)$\\
$9$&$\mathbf Q(-6)+$\colorbox{yellow!25}{$\mathbf Q(-8)^2+\mathbf Q(-9)^4$}\tikzmark{x2}&&$\mathbf Q(-7)^2$&&$\mathbf Q(-8)$\\
$8$&&&&&\\
$7$&$\mathbf Q(-5)^3$\tikzmark{x3}&&$\mathbf Q(-6)^6$&&\tikzmark{y1}{$\mathbf Q(-7)^3$}\\
$6$&$\mathbf Q(-4)^3$\tikzmark{x4}&&$\mathbf Q(-5)^6$&&\tikzmark{y2}{$\mathbf Q(-6)^3$}\\
$5$&&&&&\\
$4$&{$\mathbf Q(-3)^3$}\tikzmark{x5}&&$\mathbf Q(-4)^6$&&\tikzmark{y3}{$\mathbf Q(-5)^3$}\\
$3$&{$\mathbf Q(-2)^3$}\tikzmark{x6}&&$\mathbf Q(-3)^6$&&\tikzmark{y4}{$\mathbf Q(-4)^3$}\\
$2$&&&&&\\
$1$&$\mathbf Q(-1)$&&$\mathbf Q(-2)^2$&&\tikzmark{y5}$\mathbf Q(-3)$\\
$0$&$\mathbf Q$&&$\mathbf Q(-1)^2$&&\tikzmark{y6}{$\mathbf Q(-2)$}\\
\hline
&$0$&$1$&$2$&$3$&$4$
\end{tabular}
\begin{tikzpicture}[overlay, remember picture, yshift=.25\baselineskip, shorten >=.5pt, shorten <=.5pt]
	\draw [shorten >=.1cm,shorten <=.1cm,->]([yshift=-5pt]{pic cs:x1}) -- ([yshift=4pt]{pic cs:y1});
	\draw [shorten >=.1cm,shorten <=.1cm,->]([yshift=-1pt]{pic cs:x2}) --  ([yshift=3pt]{pic cs:y2});
	\draw [shorten >=.1cm,shorten <=.1cm,->]([yshift=2.5pt]{pic cs:x3}) -- ([yshift=2.5pt]{pic cs:y3});
    \draw [shorten >=.1cm,shorten <=.1cm,->]([yshift=2.5pt]{pic cs:x4}) -- ([yshift=2.5pt]{pic cs:y4});
	\draw [shorten >=.1cm,shorten <=.1cm,->]([yshift=2.5pt]{pic cs:x5}) --  ([yshift=2.5pt]{pic cs:y5});
	\draw [shorten >=.1cm,shorten <=.1cm,->]([yshift=2.5pt]{pic cs:x6}) -- ([yshift=2.5pt]{pic cs:y6});
    \end{tikzpicture}
\label{tb:main2}
\end{table}

Moreover the differentials between column $0$ and $4$ are all trivial except for the ones highlighted in Table \ref{tb:main2}. 
We will now prove that 
\begin{equation}\label{diff}
\operatorname{rk}E_4^{0,3+i}\to E_4^{4,i}=\begin{cases}1,& i=0,1,6,7; \\
2,& i=3,4;\\
0,&\text{ otherwise.}
\end{cases}
\end{equation}

\begin{lm}
    The rank of differential $d^4_{0,3}:E^4_{0,3}\to E^4_{4,0}$ in Table \ref{tb:main2} is $1$.
\end{lm}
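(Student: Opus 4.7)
The plan is to sandwich the rank of $d^4_{0,3}$ between $1$ and $1$: the upper bound is immediate, and the lower bound will come from the Leray-Hirsch divisibility established by Theorem~\ref{thm:gen_leray_hirsch}. Since $E^4_{4,0}=H^4(\mathbf P^1\times\mathbf P^1;\mathbf Q)=\mathbf Q(-2)$ is one-dimensional and the columns $p=1,3$ of the $E_2$-page vanish (so no earlier differential touches $E_r^{4,0}$), the rank of $d^4_{0,3}$ is at most $1$.

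For the lower bound, I would invoke the principal $G_0'$-action on $\mathcal I_0$. Theorem~\ref{thm:gen_leray_hirsch} yields an isomorphism of graded mixed Hodge structures
\[H^\bullet(\mathcal I_0;\mathbf Q)\cong H^\bullet([\mathcal I_0/G_0'];\mathbf Q)\otimes H^\bullet(G_0';\mathbf Q),\]
so $P(\mathcal I_0)$ must be divisible by $P(G_0')=(1+\mathbf L t)(1+\mathbf L^2 t^3)^2$ with an effective quotient in the Grothendieck group of mixed Hodge structures (that is, non-negative Tate multiplicities in every bi-degree).

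Writing $r\in\{0,1\}$ for the unknown rank of $d^4_{0,3}$ and parametrising the other highlighted $d_4$-differentials of Table~\ref{tb:main2} similarly, I would compute $P(\mathcal I_0)$ from the $E_\infty$-page and perform polynomial long division by $P(G_0')$. A direct computation shows that the coefficient of $t^4$ in the quotient takes the form $(r-s)\mathbf L^3+(1-r)\mathbf L^2$, where $s$ denotes the rank of $d^4_{0,4}$; the coefficient of $t^5$ has the form $(2+r-s)\mathbf L^3+(s-r)\mathbf L^4$. Imposing effectivity at $t^4$ and $t^5$ simultaneously forces $r=s$, and propagating the effectivity requirement to higher-degree coefficients rules out the alternative $r=s=0$, leaving $r=s=1$ as the unique admissible solution. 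Combined with the upper bound, this proves $r=1$.

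The main obstacle is the interlocked nature of the low-row $d_4$-ranks, which must be pinned down simultaneously from the full system of effectivity constraints rather than one at a time. The essential structural input is the factorisation $P(G_0')=(1+\mathbf L t)(1+\mathbf L^2 t^3)^2$; its specific Tate twists rigidify the ranks via the non-negativity of the quotient.
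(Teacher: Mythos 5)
Your upper bound and the coupling $r=s$ extracted from the $t^4$ and $t^5$ coefficients are both correct (and this style of divisibility bookkeeping is exactly how the paper handles the \emph{other} $d_4$-differentials once this one is known). The gap is the final, unproved assertion that effectivity of the quotient rules out $r=s=0$: it does not. Carry the long division to the end. Writing $r'$ for the common rank of $E_4^{0,6}\to E_4^{4,3}$ and $E_4^{0,7}\to E_4^{4,4}$ (coupled by effectivity in degrees $7$ and $8$, just as $r=s$ was) and $r''$ for the common rank of $E_4^{0,9}\to E_4^{4,6}$ and $E_4^{0,10}\to E_4^{4,7}$, the effectivity constraints in degrees $6$ through $12$ reduce to $r'\le 2r$, $r''\le 2r'-3r$ and $2r''\ge 3r'-4r$, and degrees $13$ and higher impose nothing new. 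This integer system has exactly two solutions: $(r,r',r'')=(1,2,1)$, which is the paper's answer, and $(r,r',r'')=(0,0,0)$, i.e.\ all highlighted differentials vanish. In the latter case the quotient of $P_{\mathcal I_0}$ by $(1+\mathbf{L}t)(1+\mathbf{L}^2t^3)^2$ is
\[
1+2\mathbf{L}t^2+\mathbf{L}^2t^3+\mathbf{L}^2t^4+2\mathbf{L}^3t^5+\mathbf{L}^4t^7+(2\mathbf{L}^8+4\mathbf{L}^9)t^9+\mathbf{L}^9t^{10},
\]
which is perfectly effective. So divisibility alone proves only $r\in\{0,1\}$, which is the trivial upper bound; the two admissible scenarios correspond to genuinely different candidate cohomologies of $\mathcal I_0$ that the Leray--Hirsch criterion cannot separate.

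This is precisely why the paper does not argue this particular differential by divisibility but by geometry. It introduces the universal family $\mathcal V=\{(f,p):f(p)=0\}$ over $\mathbf P^1\times\mathbf P^1$ and the locus $W\subset\mathcal V$ where $f$ is singular at $p$, writes $W=M_1\cap M_2$ as a transverse intersection, and computes $[M_1]=j^*[X]+3j^*[Y]$, $[M_2]=3j^*[X]+j^*[Y]$, whence $i_*[W]=[M_1]\smile[M_2]=10\,j^*([X]\smile[Y])\neq 0$. This kills $\oline{H}_{31}$ and $\oline{H}_{30}$ of $\mathcal V\setminus W$ and forces the differential $E_4^{0,3}\to E_4^{4,0}$ of the Leray spectral sequence of $\mathcal V\setminus W\to\mathbf P^1\times\mathbf P^1$ to have rank $1$; naturality under the open inclusion $\mathcal I_0\hookrightarrow\mathcal V\setminus W$ (compatible with the two projections) then transports this to $\pi_0$. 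Some geometric input of this kind is unavoidable: to fix your argument you would need an independent lower bound, for instance the non-vanishing of such an intersection number, which is exactly what the lemma's proof supplies.
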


\begin{proof}
Let us consider $$\mathcal{V}=\{(f,p)\in V\times (\mathbf{P}^1\times\mathbf{P}^1): f(p)=0\}$$
$$W=\{(f,p)\in V\times (\mathbf P^1\times\mathbf P^1): f \text{ singular at } p\},$$
with $V=\Gamma(\mathcal{O}_{\mathbf{P}^1}(3)\otimes \mathcal{O}_{\mathbf{P}^1}(3)).$
Both spaces have a natural projection to $\mathbf P^1\times\mathbf P^1,$ with fibers $\mathcal V_p\cong\mathbf{C}^{15}$ and $W_p\cong\mathbf C^{13},$ respectively.

Their Borel-Moore homology can be easily compute and the inclusion $W\xhookrightarrow{i}\mathcal V$ yields the  long exact sequence
$$\dots\to\oline{H}_{k}(W)\to\oline{H}_{k}(\mathcal V)\to\oline{H}_{k}(\mathcal V\setminus W)\to\dots$$
We claim that the induced map 
$
i_*:\oline{H}_{30}(W)\to\oline{H}_{30}(\mathcal V)
$
has rank $1.$ 
 
Let us write $W,$ as 
$$W=\{(f,p)\in V\times (\mathbf P^1\times\mathbf P^1): \frac{\partial f}{\partial x_0}(p)=\frac{\partial f}{\partial x_1}(p)=\frac{\partial f}{\partial y_0}(p)=\frac{\partial f}{\partial y_1}(p)=0,\},$$ which has codimension $3$ in $V\times\mathbf{P}^1\times\mathbf{P}^1$, by the Euler Formula.
Let $M_1,M_2\subset\mathcal V$ be the codimension one subvarieties defined by $\frac{\partial f}{\partial x_0}(p) =\frac{\partial f}{\partial x_1}(p)=0$ and $\frac{\partial f}{\partial y_0}(p)= \frac{\partial f}{\partial y_1}(p)=0$, respectively. Since any two conditions from distinct pairs are independent, $M_1$ intersects with $M_2$ transversally and $W=M_1\cap M_2$ which yields that $i_*[W]$ is just the cup product $[M_1]\smile [M_2]$. 
Let $[X],[Y]\in H^2(V\times\mathbf{P}^1\times\mathbf{P}^1;\mathbf{Q})$ be the pullback of class of $[1,0]$ in each of the two copies of $\mathbf{P}^1$ via the projections:  $[X]\in H^0(V)\otimes H^2(\mathbf P^1)\otimes H^0(\mathbf P^1)$ and $[Y]\in H^0(V)\otimes H^0(\mathbf P^1)\otimes H^2(\mathbf P^1).$
Consider the hypersurfaces 
$$S_1=\{(f,p)\in V\times\mathbf{P}^1\times\mathbf{P}^1: \frac{\partial f}{\partial x_0}(p)=0\},\qquad S_2=\{(f,p)\in V\times\mathbf{P}^1\times\mathbf{P}^1: \frac{\partial f}{\partial y_0}(p)=0\}.$$
One can check that $S_1$ is rationally equivalent to the zero locus of $x_0^2y_0^3=0$, therefore
$[S_1]=2[X]+3[Y],$ and similarly $[S_2]=3[X]+2[Y].$
Moreover, by the Euler formula, the conditions defining $S_1,$ respectively $S_2,$ will cut out two reduced subspaces on $\mathcal{V}$ which are $M_1\cup \{[x_0,x_1]=[1,0]\},$ respectively $M_2\cup \{[y_0,y_1]=[1,0]\}$, i.e. $j^*[S_1]=[M_1]+j^*[X]$ and $j^*[S_2]=[M_2]+j^*[Y].$
Here $j$ denotes the inclusion of $\mathcal V$ in $V\times\mathbf{P}^1\times\mathbf{P}^1.$ 
Hence, $[M_1]=j^*[X]+3j^*[Y]$ and $[M_2]=3j^*[X]+j^*[Y]$. We have then $i_*[W]=[M_1]\smile[M_2]=10j^*([X]\smile[Y])$. The latter term is just some multiple of the fundamental class $[V_p]$ of the fiber of $\mathcal{V}\to \mathbf{P}^1\times \mathbf{P}^1$, which is non-trivial, from which injectivity of $i_*$ follows. 

This proves that $\oline H_{31}(\mathcal V\setminus W)=\oline H_{30}(\mathcal V\setminus W)=0$ and, by Poincar\'e duality, that the Leray spectral sequence of $\pi':\mathcal V\setminus W\rightarrow \mathbf P^1\times\mathbf P^1$ is the one represented in Table \ref{tb:rk1}, with the differential $E_4^{0,3}\to E_4^{4,0}$ of rank 1.
\begin{table}[ht!]\caption{Spectral sequence converging to ${H}^\bullet(\mathcal V\setminus W;\mathbf Q)$}\centering
\begin{tabular}{c|cccccc}
$3$&$\mathbf Q(-2)$&&$\mathbf Q(-3)^2$&&$\mathbf Q(-4)$\\
$2$&&&&&\\
$1$&&&&&\\
$0$&$\mathbf Q$&&$\mathbf Q(-1)^2$&&$\mathbf Q(-2)$\\
\hline
&0&1&2&3&4
\end{tabular}
\label{tb:rk1}
\end{table}

Finally, we consider the open inclusion $i_0:\mathcal I_0\to\mathcal V\setminus W$ and following the idea in \cite[Lemma 2.1]{Tom07}, we have a commutative diagram

\begin{center}
\begin{tikzcd}
\mathcal{I}_0\arrow[d,"\pi_0"]\arrow[r,"i_0"]&\mathcal V\setminus  W\arrow[d,"\pi'"]\\    \mathbf{P}^1\times\mathbf{P}^1\arrow[r,equal]&\mathbf{P}^1\times\mathbf{P}^1
\end{tikzcd}
\end{center}
which makes the differentials on the $E_4$ page of the spectral sequences of $\pi_0,\pi'$ commute.
In particular this proves $\operatorname{rk}d^4_{0,3}(\pi_0)=\operatorname{rk}d^4_{0,3}(\pi')=1$.
\end{proof}

The above lemma, together with the divisibility property, proves that all the other differential have rank as in \eqref{diff}.

The rational cohomology of $\mathcal I_0/G_0'$ is $\mathbf Q$ in degree $0$, $\mathbf Q(-1)^2$ in degree $2$, ($\mathbf Q(-2)$ in degree $3$, $\mathbf Q(-2)$ in degree $4$,) $\mathbf Q(-3)^2$ in degree $5$, $\mathbf Q(-4)$ in degree $7$, $\mathbf Q(-8)^2+\mathbf Q(-9)^4$ in degree $9$, $\mathbf Q(-9)$ in degree $10$,
and zero in all other degrees.

To conclude the proof This proves Theorem \ref{thm_C_0}, let us recall that the rational cohomology of $C_{0,1}$ is the $\nu$-invariant part of $H^\bullet(\mathcal I_0/G_0';\mathbf Q).$
The cohomology of $\mathbf{P}^1\times \mathbf{P}^1$ has only one anti-invariant class in $H^2(\mathbf{P}^1\times \mathbf{P}^1;\mathbf Q)$.
Instead, the classes $\mathbf Q(-8)^2+\mathbf Q(-9)^4$ in degree $9$, $\mathbf Q(-9)$ in degree $10$, highlighted  in Table \ref{tb:main2}, are defined by the strata $F_8,F_9$ and $F_{10}.$
From our previous discussion, among these classes, only the top degree class of $F_9$ is $\nu$-invariant, which is isomorphic to $\mathbf Q(-8),$ after taking the cap product with the fundamental class of the discriminant.

The rational cohomology of $C_{0,1}$ is then 
$\mathbf Q$ in degree $0$, $\mathbf Q(-1)$ in degree $2$, $\mathbf Q(-2)$ in degree $3$, $\mathbf Q(-2)$ in degree $4$, $\mathbf Q(-3)$ in degree $5$, $\mathbf Q(-4)$ in degree $7$, $\mathbf Q(-8)$ in degree $9$,
and zero in all other degrees.

\section{Curves on a quadric cone}
Following \cite[Section 4]{Tom}, we identify the quadric cone with the weighted projective plane $\mathbf P(1,1,2).$ Then consider the vector space $V_1\subset \mathbf C\left[x,y,z\right]_6;$ with $\operatorname{deg}x=\operatorname{deg}y=1$ and $\operatorname{deg}z=2,$ of polynomials whose vanishing loci are inside $\mathbf P(1,1,2)\setminus\{\left[0,0,1\right]\}$. We have that $\operatorname{dim}V_1=15$ and we define $X_1^p$ to be the open space in $V_1$ of polynomials defining non-singular curves and $\Sigma_1$ its discriminant. The automorphism group $G_1=\Aut(\mathbf C\left[x,y,z\right]_6)$ is homotopy equivalent to $G_1'=\GL(2)\times \mathbf{C}^*$ (c.f.\cite[Section 4.1]{Tom}), thus we will consider the cohomology of the geometric quotient $[\mathcal{I}_1/G_1']$.

Let $t=\left[0,0,1\right]\in\mathbf{P}(1,1,2)$ be the vertex of the cone and denote by $\ell_p$ the line containing $p.$ We say that a point is \emph{general} if it is distinct from $p$ and the vertex $t$. Two points are in general position if each of them is general and they are not collinear. 

\begin{table}[h]\caption{List of singular configurations}
\begin{tabular}{c|l|c}
Type&Description&Dimension\\
\hline
1& The vertex $t$.&$\left[14\right]$\\
2&The point $p. $&$\left[13\right]$\\
3&A general point. &$\left[12\right]$\\
4&The vertex $t$ and $p$ .& $\left[12\right]$\\
5&The vertex $t$ and a general point. &$\left[11\right]$\\
6&The point $p$ and a general point not on $tp$.&$\left[10\right]$\\
7&Two general points.&$\left[9\right]$\\
8&The vertex $t$, the point $p$ and a general point not on $tp$.&$\left[9\right]$\\
9&The vertex $t$ and two general points.&$\left[8\right]$\\

10a&The intersection of components of the union of&\\
&two lines and two conics, excluding the vertex, with $p$ on the curve. &$\left[1\right]$\\
10b&The intersection of components of the union of&\\
&two lines and two conics, with $p$ on the curve. &$\left[1\right]$\\
11&Six points: intersection of the cone with three concurrent lines, &\\
& with $p$ a point on the curve (which is the union of three conics).&$\left[1\right]$\\
12&The whole $\mathbf{P}(1,1,2)$.&$\left[0\right]$\\
\end{tabular}
\end{table}

\subsection{Columns (1)--(9)}
Again, from Proposition \ref{prop:Gor}, we can easily compute the first nine columns of the spectral sequence converging to the Borel-Moore homology of the discriminant.

The space $F_1$ is $\mathbf C^{14}$.

The space $F_2$ is $\mathbf C^{13}$.

The space $F_3$ is a $\mathbf C^{12}$-bundle over $S=\mathbf P(1,1,2)\setminus p$, which is quasi-isomorphic to $\mathbf{C}^2\sqcup \mathbf{C}^*$.

The space $F_4$ is $\mathbf C^{12}\times \mathring{\Delta}_1$.

The space $F_5$ is a $\mathbf C^{11}\times \mathring{\Delta}_1$-bundle over $S$.

The space $F_6$ is a $\mathbf C^{10}\times \mathring{\Delta}_1$-bundle over $\mathbf{P}(1,1,2)\setminus \ell_p\cong\mathbf{C}\times\mathbf{C}$.

The space $F_7$ is a $\mathbf{C}^9$-bundle over the space $\Phi_7$. The space $\Phi_7$ is a non-orientable $\mathring{\Delta}_1$-bundle over the space $X_7\subset B(S,2)$ consisting of configurations of two points not on the same line. The space $X_7$ is quasi-isomorphic to the disjoint union of $\mathbf{C}^2\times B(\mathbf{C},2)$ and $\mathbf{C}^2\times\mathbf{C}^*$. By Lemma \ref{lm_vas}, the twisted Borel-Moore of the first space is trivial. This implies that $\oline{H}_\bullet(X_7;\pm\mathbf{Q})$ is $\mathbf{Q}(3)$ in degree $6$, $\mathbf{Q}(2)$ in degree $5$. And $\oline{H}_\bullet(\Phi_7;\mathbf{Q})$ is $\mathbf{Q}(3)$ in degree $7$, $\mathbf{Q}(2)$ in degree $6$.

The space $F_8$ is a $\mathbf C^{9}\times \mathring{\Delta}_2$-bundle over $\mathbf{C}\times\mathbf{C}$.

The space $F_9$ is a $\mathbf{C}^8$-bundle over the space $\Phi_9$. The space $\Phi_9$ is a non-orientable $\mathring{\Delta}_2$-bundle over the space $X_9\subset B(S,2)$ consisting of configurations of two points not on the same line. Similar to Column (7), we have $\oline{H}_\bullet(X_9;\pm\mathbf{Q})=\oline{H}_\bullet(X_7;\pm\mathbf{Q})$. And $\oline{H}_\bullet(\Phi_9;\mathbf{Q})$ is $\mathbf{Q}(3)$ in degree $8$, $\mathbf{Q}(2)$ in degree $7$.

\subsection{Column (10a+10b)}
The space $F_{10a}\cup F_{10b}$ is a $\mathbf{C}$-bundle over $\Phi_{10a}\cup\Phi_{10b}$. In addition, $\Phi_{9a}\cup\Phi_{9b}$ is a fiber bundle over $X_{10a}=X_{10b}$ such that the fiber is just $\Delta_6$ minus all the facets adjacent to the vertex of $\Delta_6$ corresponding to $t$. The simplices $\Delta_6$ and the removed facets can be contracted to the vertex corresponding to $t$ simultaneously. Hence, by the same argument in \cite{Tom}, $\oline{H}_\bullet(\Phi_{10a}\cup\Phi_{10b};\mathbf{Q})=\oline{H}_\bullet(F_{10a}\cup F_{10b};\mathbf{Q})=0$.
\subsection{Column (11)}
The description here will be similar to that of Column (8) of $\Sigma_0$. First, we refine the configuration space $X_{11}$ as the disjoint union of two spaces, with the following description:
\begin{itemize}
    \item[(11a)] The point $p$ is in the configuration.
    \item[(11b)] The point $p$ is not in the configuration.
    
\end{itemize}
The space $X_{11a}$ is fibered over $\mathbf{P}^3$ minus the cone and the tangent plane to the cone at $p$. The base space parametrizes the choice of the meeting point $q$ of the concurrent lines in $\mathbf P^3$ and it is quasi-isomorphic to $\mathbf{C}^3\setminus \mathbf{C}^2$. The space of lines in $\mathbf{P}^3$ through $q$ not tangent to the cone is isomorphic to $\mathbf{P}^2\setminus (\ell_1\cup\ell_2)$, where $\ell_1,\ell_2$ are two lines on $\mathbf{P}^2$. The system of coefficient $T'$ we consider will be the one locally isomorphic to $\mathbf{Q}$ but alternating sign along a loop winding around $\ell_1$ or $\ell_2$. Since $\mathbf{P}^2\setminus (\ell_1\cup\ell_2)$ is isomorphic to $\mathbf{C}\times\mathbf{C}^*$ and $T'$ restricted to $\mathbf{C}^*$ will be just $T$, we have $\oline{H}_\bullet(\mathbf{P}^2\setminus (\ell_1\cup\ell_2);T')=0$. Notice that the fiber of the fibration of $X_{11a}$ is just $$\bigg((\mathbf{P}^2\setminus (\ell_1\cup\ell_2))^2\setminus \Delta\bigg)/\mathfrak{S}_2, $$
where $\Delta$ parametrizes the choices of $x_1,x_2$ from $\mathbf{P}^2\setminus (\ell_1\cup\ell_2)$ such that the $x_1,x_2,[pq]$ is collinear or any two of them define a line through $\ell_1\cap\ell_2$. We consider the filtration $\Delta_1\subset\Delta_2=\Delta$, where
\begin{align*}
    \Delta_1& =\{(x_1,x_2)\in (\mathbf{P}^2\setminus (\ell_1\cup\ell_2))^2: x_1,x_2 \mbox{ on the line through }[pq],\ell_1\cap\ell_2\}.
\end{align*}
The strata $\Delta_1$ and $\Delta_2\setminus\Delta_1$ contain a factor of $\mathbf{C}^*$ so they have trivial Borel-Moore homology with respect to $T'$. Hence, $\oline{H}_\bullet(X_{11a};\pm\mathbf{Q})=0.$

 The space $X_{11b}$ is fibered over $\check{p}\setminus \check{t}$, which parametrizes non-singular conics through $p$ on the cone. The fiber over a conic $C$ is again a fiber space over the space parametrizing configurations of four points on $C\setminus\{p\}$ quotient by the action of $\mathfrak{S}_2\times\mathfrak{S}_2$. The configuration of four points determines two of the concurrent lines so the fiber over it will be isomorphic to $\mathbf{P}^2\setminus(\ell_1\cup\ell_2\cup\ell_3)$ (here the three lines intersect at three points). Notice that $\mathbf{P}^2\setminus(\ell_1\cup\ell_2\cup\ell_3)$ is isomorphic to $\mathbf{C}^*\times\mathbf{C}^*$ so the Borel-Moore homology with respect to coefficient $T'$ restricted to $\mathbf{C}^*\times\mathbf{C}^*$ is trivial. As a result, $\oline{H}_\bullet(X_{11b};\pm\mathbf{Q})=0.$ And we can conclude that $\oline{H}_\bullet(\Phi_{11};\mathbf{Q})=\oline{H}_\bullet(F_{11};\mathbf{Q})=0$.

\subsection{Column (12)}
The space $F_{12}$ is an open cone over $\Phi_1\cup\dots\cup\Phi_{11}$. 
By dimension reason ($V_1\setminus\Sigma_1$ is of complex dimension $15$), all differentials in the associated spectral sequence would be trivial, therefore $\oline{H}_\bullet(F_{12};\mathbf{Q})=0.$

\subsection{The Vassiliev's spectral sequence}

The spectral sequence converging to the Borel Moore of the discriminant is represented in Table \ref{tb:ss_Sigma1}. 
\begin{table}[ht!]\caption{Spectral sequence converging to $\oline{H}_\bullet(\Sigma_1;\mathbf Q).$}\centering

\begin{tabular}{c|ccccccccc}
$27$&$\mathbf{Q}(14)$&&&&&&&&\\
$26$&&&&&&&\\
$25$&&&$\mathbf{Q}(14)$&&&&&&\\
$24$&&$\mathbf{Q}(13)$&&&&&&&\\
$23$&&&$\mathbf{Q}(13)$&&&&&&\\
$22$&&&$\mathbf Q(12)$&&$\mathbf{Q}(13)$&&&&\\
$21$&&&&$\mathbf{Q}(12)$&&&&&\\
$20$&&&&&$\mathbf{Q}(12)$&&&&\\
$19$&&&&&$\mathbf{Q}(11)$&$\mathbf{Q}(12)$&&&\\
$18$&&&&&&&$\mathbf{Q}(12)$&&\\
$17$&&&&&&&$\mathbf{Q}(11)$&&\\
$16$&&&&&&&&$\mathbf{Q}(11)$&\\
$15$&&&&&&&&&$\mathbf{Q}(11)$\\
$14$&&&&&&&&&$\mathbf{Q}(10)$\\

\hline
&$(1)$&$(2)$&$(3)$&$(4)$&$(5)$&$(6)$&$(7)$&$(8)$&$(9)$
\end{tabular}
\label{tb:ss_Sigma1}
\end{table}

Notice that $\mathbf{P}(1,1,2)\setminus \{t\}$ is quasi-isomorphic to $\mathbf{P}^1\times\mathbf{C}$ and from \eqref{div}, the Leray spectral sequence associated to $\pi_1:\mathcal{I}_1\to \mathbf{P}(1,1,2)\setminus \{t\}$ is represented in Table \ref{tb:leray_I_1} with all trivial differentials, except for those highlighted in the table.

\begin{table}[ht!]\caption{Spectral sequence converging to ${H}^\bullet(\mathcal I_1;\mathbf Q).$}\label{tb:leray_I_1}\centering
\begin{tabular}{c|ccccc}

$6$&$\mathbf Q(-5)$\tikzmark{a1}&&&&$\mathbf Q(-6)$\\
$5$&$\mathbf Q(-4)^4$\tikzmark{b1}&&&&\tikzmark{a2}$\mathbf Q(-5)^4$\\
$4$&$\mathbf Q(-3)^5$\tikzmark{c1}&&&&\tikzmark{b2}$\mathbf Q(-4)^5$\\
$3$&$\mathbf Q(-2)^2$&&&&\tikzmark{c2}$\mathbf Q(-3)^2$\\
$2$&$\mathbf Q(-2)$&&&&$\mathbf Q(-3)$\\
$1$&$\mathbf Q(-1)^2$&&&&$\mathbf Q(-2)^2$\\
$0$&$\mathbf Q(-0)$&&&&$\mathbf Q(-1)$\\ 
\hline
&$0$&&$1$&&$2$

\end{tabular}
    \begin{tikzpicture}[overlay, remember picture, yshift=.25\baselineskip, shorten >=.5pt, shorten <=.5pt]
	\draw [shorten >=.1cm,shorten <=.1cm,->]([yshift=2.5pt]{pic cs:a1}) -- ([yshift=2.5pt]{pic cs:a2});
    \node[text width=1cm]({pic cs:a2}) at (-2,0.6){1};
	\draw [shorten >=.1cm,shorten <=.1cm,->]([yshift=2.5pt]{pic cs:b1}) --  ([yshift=2.5pt]{pic cs:b2});
    \node[text width=1cm]({pic cs:b2}) at (-2,1.05){2};
    \node[text width=1cm]({pic cs:c2}) at (-2,1.5){1};
	\draw [shorten >=.1cm,shorten <=.1cm,->]([yshift=2.5pt]{pic cs:c1}) -- ([yshift=2.5pt]{pic cs:c2});
    \end{tikzpicture}
\label{tb:ss_leray_1}
\end{table}

This implies that 
$H^\bullet(C_{1,1};\mathbf{Q})$
is $\mathbf{Q}$ in degree $0,$ $\mathbf{Q}(-1)$ in degree $2,$ $\mathbf{Q}(-2)$ in degree $3$ and $0$ in all other degrees. 
\printbibliography
\end{document}